\documentclass[12 pt]{amsart}
\usepackage{amsfonts,amsmath,amsthm,amssymb, amscd}
\usepackage[top=2cm, bottom=5cm, left=3cm, right=1cm]{geometry}

\usepackage{tikz-cd}
\usepackage{xcolor}
\usepackage{placeins}
\usepackage{hyperref}

\newtheorem{theorem}{Theorem}[section]
\theoremstyle{definition}

\newtheorem{corollary}[theorem]{Corollary}
\newtheorem{lemma}[theorem]{Lemma}
\newtheorem{proposition}[theorem]{Proposition}
\newtheorem{question}[theorem]{Question}
\newtheorem{problem}[theorem]{Problem}
\newtheorem{definition}[theorem]{Definition}
\newtheorem{example}[theorem]{Example}
\newtheorem{remark}[theorem]{Remark}

\newcommand{\Alex}{\operatorname{Alex}}
\newcommand{\Aut}{\operatorname{Aut}}

\newcommand{\Conj}{\operatorname{Conj\,}}

\newcommand{\Core}{\operatorname{Core}}
\newcommand{\im}{\operatorname{Im}}

\newcommand{\Inn}{\operatorname{Inn}}

\newcommand{\T}{\operatorname{T}}

\newcommand{\R}{\operatorname{R}}

\newcommand{\id}{\mathrm{id}}

\newcommand{\Hol}{\operatorname{Hol}}

\setlength\oddsidemargin{.8mm}
\setlength\evensidemargin{.8mm}
\setlength\textheight{21cm}
\setlength\textwidth{16cm}

\usepackage{lineno}
\usepackage{color}

\title[Rota--Baxter and averaging  operators]
{
Rota--Baxter and averaging  operators on racks and rack algebras}
\author[Bardakov, Bovdi]{V.~G.~Bardakov, V.~A.~Bovdi}

\address{Sobolev Institute of Mathematics, 4 Acad. Koptyug avenue, 630090, Novosibirsk, Russia.}
\address{Novosibirsk State Agrarian University, Dobrolyubova street, 160, Novosibirsk, 630039, Russia.}
\address{Regional Scientific and Educational Mathematical Center of Tomsk State University, 36 Lenin Ave., Tomsk, Russia.}
\email{bardakov@math.nsc.ru}

\address{United Arab Emirates University, P.O. Box 15551, Al Ain, Abu Dhabi, United Arab Emirates.}
\email{vbovdi@gmail.com}
\date{\today}

\begin{document}
\maketitle
\begin{abstract}
In the present article we define and investigate relative Rota--Baxter operators and  relative averaging operators   on racks  and rack  algebras. 
Also, if  $B$ is a Rota--Baxter or  averaging operator on a rack $X$, then we can extend $B$ by linearity to the rack algebra $\Bbbk[X]$. On the other side, we have definitions of Rota--Baxter and averaging operators  on arbitrary  algebra. We find connections between these operators. In particular, we prove that if $B : X \to X$ is an averaging operator on a rack, then its linear extension on a rack algebra $\Bbbk[X]$ gives an averaging operator.

\textit{Keywords:} Rack, quandle, rack extension, rack algebra, quandle algebra,  relative averaging operator,  relative Rota--Baxter operator.

 \textit{Mathematics Subject Classification 2010:  17A30, 20E22.}
\end{abstract}
\maketitle
\tableofcontents

\section{Introduction}

The study of operators on different  algebraic systems is a classic direction in algebra. The most studied operators over these objects are their endomorphisms and automorphisms. To define an endomorphism of an algebraic system, it is sufficient to define it on the generating set, and then we extend it to all elements.
However, there are other types of operators, the construction of which is not so simple. Examples of such operators include the Reynolds operator and the averaging operator, which arose independently in different areas of mathematics. Let $X$ be a locally compact Hausdorff space and let $C_0(X)$ denotes the Banach algebra of all continuous functions on $X$ vanishing at $\infty$. A Reynolds operator $T:  C_0(X) \to C_0(X)$ (see \cite{R}) is a linear operator, satisfying the Reynolds identity:
\[
T(a) T(b) = T \left( T(a) b + a T(b) - T(a) T(b)\right), \qquad (a, b \in C_0(X)).
\]
Obviously,  a projection $T$ is a Reynolds operator if  $T$ is averaging, i.e.
\[
T(a) T(b) = T \left( a T(b) \right), \qquad (a, b \in C_0(X)).
\]
Reynolds introduced this operator in connection with the theory of turbulence. Averaging operators are studied in the works of Birkoff \cite{Bir1, Bir2} and further by Kelley \cite{K}, Rota \cite{R1, R2}, Seever \cite{S} and many others (see \cite{HP, FM, KKL, D, PB, DM, BCEM}, as well as references therein).

One type of environment of many interesting operators is the Rota-Baxter operator. The first appearance of Rota--Baxter operators (shortly, RB-operators) for commutative algebras can be traced back to Baxter's seminal paper \cite{Bax} in 1960. Since then, the theory of Rota-Baxter operators has undergone extensive development by various authors in different fields of mathematics. It should be noted that Rota-Baxter operators play an important role due to their connection with a number of mathematical concepts, including the Yang-Baxter equation \cite{MR0674005, MR0725413}, Loday algebras \cite{MR3021790}, and double Poisson algebras \cite { MR2568415}.
A historical overview and connections of these operators with other mathematical concepts can be found in the monograph \cite{G}.

A  Rota--Baxter operator on groups were introduced in \cite{GLS}, a  group $G$ with a Rota--Baxter operator $B$ is called a Rota--Baxter group (shortly, RB-group). In \cite{GLS} it was shown that if $(G, B)$ is a Rota--Baxter Lie group, then the tangent map $B$ at identity is a Rota--Baxter operator of weight $1$ on the Lie algebra of the Lie group $G$. The properties of $RB$-groups are actively studied in \cite{BG}. In \cite{BG-1}, a connection between $RB$-groups,  the Yang--Baxter equation and skew braces  was found. In \cite{Gon} it was proven that the Rota--Baxter operators on the group ring $\Bbbk[G]$ are in one-to-one correspondence with the Rota--Baxter operators  of weight 1 on $G$.
In \cite{JSZ} relative Rota--Baxter groups were defined as groups with a relative Rota--Baxter operator. In this case, the relative Rota--Baxter operator depends not only on the group, but also on the  group of its automorphisms.  In the case when this group of automorphisms is a group of inner automorphisms, we obtain the Rota--Baxter operator.  Some properties of relative Rota--Baxter operators are studied in \cite{RS-23}.  Relative Rota--Baxter operators on an arbitrary Hopf algebra were defined in \cite{BN}.

Following Kurosh \cite{Kurosh} in the sequel  of this article we called a non-empty set with a  binary algebraic operation a groupoid. Our attention  concentrated on the investigation  of Rota--Baxter  and averaging operators  on such groupoids as groups, racks and quandles. Obviously, for any groupoid one can define averaging operator.
Recall that a quandle is a groupoid  with a binary operation that satisfies axioms encoding the three Reidemeister moves of planar diagrams of links in the $3$-space. If we forget on the first Reidemeister move we get a groupoid which is called by rack.
Racks and quandles were introduced by   Joyce \cite{Joyce} and Matveev \cite{Matveev}, who showed that link quandles are complete invariants of non-split links up to orientation of the ambient $3$-space.
It is worth emphasizing that racks and quandles are also useful in certain areas of mathematics: in group theory, set-theoretic solutions of the Yang-Baxter equations and Hopf algebras, and  discrete integrable systems (see \cite{AG}).
 Automorphisms of quandels, which reveal much about their internal structure, are studied in detail in a series of articles \cite{BDS, BarTimSin}.

 In the present article we define and study relative Rota--Baxter and  averaging operators  on  racks and racks algebras.
In particular, we are studying the following question.
Let $(X, *)$ be a rack (quandle) and $B \colon X \to X$ is a relative Rota--Baxter or  averaging operator on $X$.
Is it possible to define new algebraic binary operation $\circ$ on $X$ such that $(X, \circ)$ is a rack (quandle),  $B$ is a relative Rota--Baxter or  averaging operator on $(X, \circ)$ and $B$ induces a rack homomorphism $B : (X, \circ) \to (X, *)$?

Also, if  $B$ is a Rota--Baxter or  averaging operator on $X$, then we can extend $B$ by linearity to the rack algebra $\Bbbk[X]$. On the other side, we have definitions of Rota--Baxter and averaging operators  on arbitrary  algebra. The question on connection  between these two definitions arises.
We prove that if $B : X \to X$ is an averaging operator on a rack, then its linear extension on a rack algebra $\Bbbk[X]$ gives an averaging operator. The situation with Rota--Baxter operators  is more complicated. We give conditions under which the linear extension of a Rota--Baxter operator $B : X \to X$ on $\Bbbk[X]$ (we shall call this extension by rack Rota--Baxter operator) gives an algebraic Rota--Baxter operator on the algebra $\Bbbk[X]$.

The paper is structured as follows. The next section is devoted to review some facts about Rot--Baxter operators, relative Rot--Baxter operators, racks and quandles.
We give definition and prove some properties of relative Rota--Baxter operators on racks  in Section~\ref{RelRack}.

In Section \ref{Aver} we define semi-direct product of racks, that gives an answer on a question ~\cite[Problem 4.15]{BNS}, further we introduce  a notion
of (relative) averaging operators on racks. We proved that if  $(X, *)$ is a rack  and $B : X \to X$ is an  averaging operator, then it defines
 new algebraic  operation $\circ$ on $X$ such that $(X, \circ)$ is a rack,  $B$ is an   averaging operator on $(X, \circ)$ and $B$ induces a rack homomorphism $B : (X, \circ) \to (X, *)$.

In Section \ref{PropAver} we formulate some properties of averaging operators. In particular, for Cartesian product
$X:= X_1 \times X_2$ of quandles $X_1$ and $X_2$ we define   operators $P_i:   X \to X_i$ and prove that they are  averaging operators on $X$. Also we define averaging operator on union of averaging racks.

In Section \ref{RackRing} we consider two approach to definition of Rota--Baxter and averaging  operators on rack algebras. We find conditions under which linear extension of RB-operator on $X$ gives an RB-operator on algebra $\Bbbk[X]$. We prove that any averaging  operator on $X$ can be extended to an averaging operator on  $\Bbbk[X]$.

In Section \ref{GenRack} we  suggested some generalizations of conjugation and core  quandles on Rota--Baxter groups.

At the end of the article we formulate some questions for further research.


\section{Preliminaries} \label{Pr}

A {\it groupoid} $(G, \cdot)$  is a non-empty set $G$ with a binary algebraic operation `$\cdot$' (see \cite{Kurosh}).
A groupoid $G$ is called {\it idempotent} if $a \cdot a = a$ for any $a \in G$.

Let $\Bbbk$ be a field and let $A$ be an  algebra over  $\Bbbk$. A linear operator $R: A \to A$ is called
a {\it Rota--Baxter operator of weight $\lambda\in \Bbbk$} if
\begin{equation}\label{RBAlgebra}
R(x)R(y) = R( R(x)y + xR(y) + \lambda xy ), \qquad\qquad(x, y \in A).
\end{equation}
An algebra endowed with a Rota--Baxter operator is called a {\it Rota--Baxter algebra}.

In \cite{GLS} were defined   Rota--Baxter operators of weights $\pm1$ on a group.

\begin{definition}[\cite{GLS}] \label{Def:1}
Let $G$ be a group  and  let $B: G\to G$ be a  map.
\begin{itemize}
\item[(i)]  $B$ is called a {\it Rota--Baxter operator (RB-operator)  of weight 1 } if
\[
B(g)B(h) = B( g B(g) h B(g)^{-1} ),  \qquad\qquad(g,h\in G);
\]
\item[(ii)] $B$ is called a {\it Rota--Baxter operator (RB-operator)  of weight $-1$} if
\[
B(g) B(h) = B( B(g) h B(g)^{-1} g ),  \qquad\qquad(g,h\in G).
\]
\end{itemize}
\end{definition}

A group endowed with an RB-operator is called a {\it Rota--Baxter group (RB-group)}.

\begin{example}\label{Exccgaf}
Let $G$ be a group. The following maps:
\[
 B_{[0]}(g):= e; \qquad  B_{[-1]}(g): = g^{-1},\qquad \qquad (g\in G)
\]
are $RB$-operators  which are  called {\it  elementary Rota--Baxter operators}.
\end{example}

A group $G$ is called {\it Rota--Baxter-elementary}, if any Rota-Baxter operator on $G$ is elementary.
It is evident that every cyclic group of prime order is elementary.


\begin{proposition}[\cite{GLS}] \label{Prop:2}
 Let $(G, B)$ be a Rota--Baxter group. Define on  $G$ the operation
\[
g \circ h:= g B(g) h B(g)^{-1}, \qquad\qquad  (g, h \in G)
\]
The  following conditions hold:
\begin{itemize}
\item[(i)]   $(G, \circ)$ is  a group;

\item[(ii)] the map $B$ is an RB-operator on the group $(G, \circ)$;

\item[(iii)] $B:  (G, \circ) \to G$ is a group homomorphism of RB-groups  $(G, \circ, B)$ and  $(G, B)$.
\end{itemize}
\end{proposition}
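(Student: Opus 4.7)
The plan rests on two preliminary identities extracted directly from the Rota--Baxter axiom. First, setting $g = h = e$ in Definition~\ref{Def:1}(i) gives $B(e)^2 = B(e B(e) e B(e)^{-1}) = B(e)$, from which $B(e) = e$. Second, since $g \circ h = g B(g) h B(g)^{-1}$ by definition, the RB axiom may be rewritten as the homomorphism-type identity
\[
B(g \circ h) = B(g) B(h), \qquad (g, h \in G).
\]
This identity drives the entire proof, even before $(G, \circ)$ is known to be a group, since its derivation only uses the RB axiom on $(G, \cdot)$.

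For part (i), I would verify the group axioms in $(G, \circ)$ in turn. Associativity: expand $(g \circ h) \circ k = (g\circ h) B(g \circ h) k B(g \circ h)^{-1}$, substitute $B(g \circ h) = B(g)B(h)$, cancel the interior $B(g)^{-1} B(g)$, and compare with $g \circ (h \circ k) = g B(g) (h B(h) k B(h)^{-1}) B(g)^{-1}$; the two agree on the nose. Identity: using $B(e) = e$, both $e \circ g$ and $g \circ e$ telescope to $g$. Inverse: the candidate $g^{-1_\circ} := B(g)^{-1} g^{-1} B(g)$ satisfies $g \circ g^{-1_\circ} = e$ by immediate cancellation, and for $g^{-1_\circ} \circ g = e$ one first applies the homomorphism identity to $g \circ g^{-1_\circ} = e$ to deduce $B(g^{-1_\circ}) = B(g)^{-1}$, after which a short telescoping confirms the other side.

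Part (iii) is then immediate: the relation $B(g \circ h) = B(g) B(h)$ is exactly the statement that $B \colon (G, \circ) \to (G, \cdot)$ is a group homomorphism, and compatibility with the Rota--Baxter operators is tautological since $B$ is the same map on both sides. For part (ii), I would verify the RB identity $B(g) \circ B(h) = B(g \circ B(g) \circ h \circ B(g)^{-1_\circ})$ on $(G, \circ)$ by expanding both sides. The left-hand side unwinds to $B(g) \cdot B(B(g)) \cdot B(h) \cdot B(B(g))^{-1}$. For the right-hand side, applying $B(x \circ y) = B(x)B(y)$ repeatedly collapses the long $\circ$-product inside $B$ into the four-factor $\cdot$-product $B(g) \cdot B(B(g)) \cdot B(h) \cdot B(B(g)^{-1_\circ})$, whose last factor equals $B(B(g))^{-1}$ by applying the homomorphism identity to $B(g) \circ B(g)^{-1_\circ} = e$. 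The two sides then match.

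The main obstacle is not a single hard step but the bookkeeping in part~(ii): one must scrupulously distinguish inverses in $(G, \cdot)$ from those in $(G, \circ)$ and convert between them via the homomorphism identity at each turn. Once the identity $B(g \circ h) = B(g)B(h)$ is in hand, there are no further surprises; every claim reduces to a careful, but elementary, manipulation.
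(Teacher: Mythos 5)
Your proof is correct. The paper itself gives no proof of Proposition~\ref{Prop:2} (it is quoted from \cite{GLS}), and your argument --- establishing $B(e)=e$, recasting the Rota--Baxter axiom as the identity $B(g\circ h)=B(g)B(h)$, and then checking the group axioms, the homomorphism property, and the RB identity on $(G,\circ)$ by direct expansion with careful handling of $\circ$-inverses via $B(x^{-1_\circ})=B(x)^{-1}$ --- is essentially the standard argument from that source.
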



\begin{definition}[\cite{JSZ}] \label{Def:2}
Let $H$ and $G$ be  groups.   Let   $\phi:  G\to \Aut(H)$ be a group homomorphism and let $B: H\to G$ be a map. A {\it relative Rota--Baxter group} is a quadruple $(H,G,\phi, B)$, where
\begin{equation}\label{eq:relative_group_RB}
B(h_1)B(h_2)=B(h_1\phi_{B(h_1)}(h_2)), \qquad\quad   (h_1, h_2\in H)
\end{equation}
 in which  $\phi_{g}:=\phi(g)\in \Aut(H)$ and  $g\in G$. The map $B$ is called a {\it relative RB-operator} on $H$ with respect to $(G, \phi)$.
\end{definition}

In particular, if $H=G$ and $\phi_g:  G \to \Inn(G)$, in which   $\phi_g(h):=ghg^{-1}$ for some fixed $g\in G$, we get a classical  Rota--Baxter group $(H, B)$.

The following definition can be found in the papers Joyce \cite{Joyce} and Matveev \cite{Matveev}.

\begin{definition}\label{Def:4jja}
A groupoid  $(Q, *)$ is called {\it quandle} if  the following axioms hold:
\begin{itemize}
\item[{\small (Q1)}] $x*x=x$ for all $x \in Q$;
\item[(Q2)] for each  $x,y \in Q$ there exists a unique $z \in Q$ such that $x=z*y$, in this case we write $z: = x \bar{*} y$;
\item[(Q3)] $(x*y)*z=(x*z) * (y*z)$ for all $x,y,z \in Q$.
\end{itemize}
If the groupoid  $(Q, *)$ satisfying only conditions (Q2) and (Q3), then  it  is called a {\it rack}.

\end{definition}

\begin{remark}
The rack (quandle)  defined in Definition \ref{Def:4jja} is called a {\it right rack (quandle)}.
If we take instead (Q2) and (Q3) the following two  axioms:
\begin{itemize}
\item[(LQ2)] for each  $x,y \in Q$ there exists a unique $z \in Q$ such that $x=y*z$, in this case we write $z: = y \bar{*} x$;
\item[(LQ3)] $z* (x*y)=(z * x) * (z *y)$ for all $x,y,z \in Q$;
\end{itemize}
then we get the definition of left rack (quandle).
\end{remark}

Used quandles and racks in  of our article are presented in the  following.
\begin{example}\label{Def:5Fvvja}
Let $(G,\cdot)$ be a group and $\phi \in \Aut(G)$. Let $C_n$ be a  cyclic group of order $n\geq 1$,
\begin{itemize}

\item[(i)] if  $a*_mb:= b^{-m} a b^{m}$ for any $a,b\in G$ and $m\in \mathbb{Z}$,  then  $\Conj_{m}(G):=(G,*_m)$  is  called the {\it $m$-conjugation quandle} of $G$. If $m=1$ then $\Conj(G):=\Conj_{1}(G)$ is called {\it conjugation quandle} of $G$;

\item[(ii)] if $a*b:= b a^{-1} b$  for any $a,b\in G$, then $\Core(G):=(G,*)$ is  called the {\it core quandle} of $G$. In particular, the core quandle $\R_n:=(C_n,*)$    is called the {\it dihedral quandle};
\item [(iii)] if $a*b:=  \phi(ab^{-1})b$ for any $a,b\in G$, then  $\Alex(G,\phi):=(G,*)$ is called the   {\it generalized Alexander quandle} of $G$ with respect to $\phi$.
\end{itemize}
\end{example}

A quandle  $(Q,*)$ is called {\it trivial} if $x*y=x$ for all $x, y \in Q$.  Unlike groups, a trivial quandle can have arbitrary number of elements. We denote the $n$-element trivial quandle by $\mathrm{T}_n$ and an arbitrary trivial quandle by $\mathrm{T}$.

If  $(Q, *)$ is a  quandle, then from axiom (Q3) we obtain that
\[
(x * y) * x = x * (y * x),\qquad\qquad(x, y \in Q).
\]
Furthermore,  the axioms (Q2) and (Q3) are equivalent to existing   the map
\begin{equation}\label{FLVVr}
S_x: Q \to Q\qquad (y\mapsto y*x,\quad y\in Q)
\end{equation}
being an automorphism of $Q$ for each $x \in Q$. These automorphisms are called {\it inner automorphisms}, and the group generated by all such automorphisms is denoted by $\Inn(X)$. A quandle  $Q$ is called {\it connected}  if it admits a transitive action by its group of inner automorphisms. For example, dihedral quandles of odd order are connected, whereas that of even order are disconnected. A quandle $X$ is called {\it involutary} if $S_x^2 = \id_Q$ for each $x \in Q$. For example, all core quandles are involutary.  A quandle (resp. rack) $(Q,*)$ is called {\it commutative} if $x * y = y * x$ for all $x, y \in Q$. The dihedral quandle $\R_3$ is commutative and no trivial quandle $\mathrm{T}_n$ with $n>1$ (i.e. more than one element) is commutative.


\section{Relative Rota--Baxter operators on racks} \label{RelRack}


 To define a   relative RB-operators on racks we recall  construction  of relative RB-operators on groups (see \cite{JSZ}).

Let $H$ and $G$ be groups  and let $\Phi :   G \to \Aut(H)$ be a homomorphism.  It is easy to check that  $(G \times H, * )$ is  a  group in which  the  multiplication $*$ is defined by
\begin{equation}\label{YbbDD}
(g_1, h_1) * (g_2, h_2):=  \left( g_1 g_2, (h_1 \Phi_{g_2}) h_2 \right),\qquad (g_1,g_2\in G, h_1,h_2\in H)
\end{equation}
where  $h_1 \Phi_{g_2}$ is the image of the element $h_1$ under the action of the automorphism $\Phi_{g_2}:= \Phi(g_2)$.
The group $G \ltimes_{\Phi} H$  is called  a {\it semi-direct product} of $G$ and $H$ with respect to $\Phi$. In particular, if $G = \Aut(H)$ then the group $\Hol(H):=\Aut(H) \ltimes_{\Phi} H$  is called  the  {\it holomorph}  of the group $H$.

Now let $B:   H \to G$ be a map and let
\[
\Gamma_B: = \{ (B(h), h)\mid h \in H \}\subseteq G \times H
\]
be its graphic. The groupoid  $(\Gamma_B,*)$   in  which
\[
(B(h), h) * (B(k), k):=  \left(B(h) B(k), (h \Phi_{B(k)}) k\right), \qquad (h,k\in H)
\]
is a group  if and only if $B$ satisfies the following analogous of the relative RB-operator
\begin{equation} \label{ORRBG}
B(h) B(k) = B\left((h \Phi_{B(k)}) k \right), \qquad\quad   (h, k \in H).
\end{equation}

\begin{remark}
The original definition of relative RB-operator on groups (see \eqref{eq:relative_group_RB}) does not coincide with   \eqref{ORRBG} because  we have a different multiplication in a semi-direct product.  If we define the multiplication on $G \ltimes_{\Phi} H$ by the formula
$$
(g_1, h_1) * (g_2, h_2):=  \left( g_1 g_2, h_1 \cdot \Phi_{g_1}(h_2) \right),\qquad (g_1, g_2 \in G,\;  h_1, h_2\in H),
$$
we get the relative RB-operator \eqref{eq:relative_group_RB}. Our approach is motivated by the using  right racks and quandles.
\end{remark}

\bigskip

Using  this construction we  define relative Rota--Baxter operators on racks.
\smallskip

Let $(X, \cdot)$ and $(A, \circ)$ be  racks. Let   $\Phi:   A \to \Conj(\Aut(X))$ be a rack homomorphism such that  $a \mapsto \Phi_a \in \Aut(X)$. Define on the set
$A \times X$ the following multiplication:
\[
(a, x) * (b, y):= (a \circ b, (x \Phi_{b}) \cdot y ),\qquad (x, y \in  X, a,  b \in A).
\]
Further, let $B:   X \to A$ be a map  and let
$$
\Gamma_B:= \{ (B(x), x)\mid x \in X \} \subset A \times X
$$
be its graphic.
The multiplication on $A \times X$ induces multiplication on $\Gamma_B$:
\[
(B(x), x) * (B(y), y):=  \left(B(x) B(y), (x \Phi_{B(y)}) y\right).
\]
If $B(x) B(y) = B\left((x \Phi_{B(y)}) y \right)$, then the set $\Gamma_B$ is closed under multiplication $*$ and we get a groupoid $(\Gamma_B,*)$.

Now we are ready to introduce the following.

\begin{definition}\label{Def:6.1}
Let $(X, \cdot)$ and $(A, *)$ be  racks  and let $\Phi:   A \to \Conj(\Aut(X))$ be a rack homomorphism. A {\it relative  Rota--Baxter operator} (RRB-operator) on  $(X, \cdot)$ with respect to $(A, \Phi)$ is a map $B:   X \to A$ such that
\begin{equation}\label{ZZxG2355H}
B(x) * B(y) = B\left((x \Phi_{B(y)}) \cdot y \right), \qquad\qquad(x, y \in X)
\end{equation}
where $\Phi_{B(y)}:= \Phi(B(y))$ is the image of $B(y) \in \Aut(X)$. The quadruple $(X, A, \Phi, B)$ is called a {\it relative Rota-Baxter rack} (RRB-rack).
If $X$ and $A$ are quandles, then the quadruple $(X, A, \Phi, B)$ is called a {\it relative Rota-Baxter quandle} (RRB-quandle).
\end{definition}

If $\Phi = {\bf 1}$ is the identity map (i.e.  $\Phi(a) = \Phi_a = \id \in \Aut(X)$ for any $a \in A$), then from~\eqref{ZZxG2355H} we have $B(x) * B(y)=B\bigl(x \cdot y \bigr)$ for any $x, y \in X$.

This yields the  following.

\begin{lemma}\label{Prop:3}
Let $(X, \cdot)$ and $(A, *)$ be  two racks. If  $f:   X \to A$ is a rack homomorphism, then the quadruple $(X, A,{\bf 1}, f)$ is a relative Rota--Baxter rack.
\end{lemma}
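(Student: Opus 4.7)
The plan is to unwind the defining identity \eqref{ZZxG2355H} in the special case $\Phi = \mathbf{1}$ and observe that it collapses precisely to the rack homomorphism equation. First I would check that the constant map $\mathbf{1} \colon A \to \Conj(\Aut(X))$ sending every $a \in A$ to $\id \in \Aut(X)$ actually is a rack homomorphism; this is immediate, since in $\Conj(\Aut(X))$ the conjugation operation sends $(\id, \id)$ to $\id$, so $\mathbf{1}(a * b) = \id = \mathbf{1}(a) \cdot_1 \mathbf{1}(b)$ for all $a, b \in A$. Hence the triple $(X, A, \mathbf{1})$ is an admissible input for Definition~\ref{Def:6.1}.

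Next I would substitute $\Phi = \mathbf{1}$ and $B = f$ into \eqref{ZZxG2355H}. For any $x, y \in X$ we have $\Phi_{f(y)} = \id$, so the right hand side becomes
\[
f\bigl( (x \,\id) \cdot y \bigr) \;=\; f(x \cdot y),
\]
while the left hand side is $f(x) * f(y)$. The required identity
\[
f(x) * f(y) \;=\; f(x \cdot y)
\]
is then exactly the assumption that $f \colon (X, \cdot) \to (A, *)$ is a rack homomorphism, which we are given.

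There is no real obstacle here; the statement is essentially a sanity check that ordinary rack homomorphisms are recovered from the relative Rota--Baxter framework upon taking the trivial action. The only subtlety worth flagging is the conventional one of making sure that the constant map into $\Conj(\Aut(X))$ satisfies the rack homomorphism axiom, which as noted above is automatic. Thus the quadruple $(X, A, \mathbf{1}, f)$ qualifies as a relative Rota--Baxter rack in the sense of Definition~\ref{Def:6.1}.
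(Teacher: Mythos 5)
Your proposal is correct and follows essentially the same route as the paper: substitute $\Phi = \mathbf{1}$ into the defining identity \eqref{ZZxG2355H} so that it collapses to $f(x)*f(y)=f(x\cdot y)$, which is the rack homomorphism hypothesis. Your extra check that the constant map $\mathbf{1}$ is itself a rack homomorphism into $\Conj(\Aut(X))$ is a harmless addition the paper leaves implicit.
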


If $A = X$ and $\Phi:   X \to \Conj(\Inn(X))$, then we introduce the following.

\begin{definition}\label{Def:11}
Let $(X, *)$ be a  rack. A map $B:   X \to X$  such that
\[
B(x) * B(y) = B\left((x * B(y)) * y \right), \qquad\qquad(x, y \in X)
\]
is called  a {\it  Rota--Baxter operator} (RB-operator) on $X$ and $(X, B)$    is
 called an  {\it RB-rack}.
 \end{definition}

\begin{remark}
If $(X, *)$ is a left rack, then one can  define an {\it  RB-operator} on $X$ as a map $B:   X \to X$  such that
\[
B(x) * B(y) = B \left( x * (B(x) * y) \right), \qquad\qquad(x, y \in X).
\]
\end{remark}

Using these properties of RRB-racks we obtain the following.

\begin{theorem} \label{Theor:7.5}
Let $(X, \cdot)$ and $(A, *)$ be two racks  and let $\Phi:   A \to \Conj(\Aut(X))$ be a rack homomorphism,  $a \mapsto \Phi_a \in  \Aut(X)$.  Let $(X, A, \Phi, B)$ be an   RRB-rack. Set
\[
x \circ y:=  (x \Phi_{B(y)})  \cdot y \in X, \qquad\qquad(x, y \in X).
\]

\begin{itemize}
\item[(i)] If $\im \Phi = \{\id\}$, then
\[
B(x) * B(y) = B(x \cdot y),   \qquad\qquad  (x, y \in X).
\]
Hence, $B : X \to A$ is a homomorphism and $x \circ y = x \cdot y$.

\item[(ii)] If $\im \Phi \not= \{\id\}$ and  for any $x, y, z \in X$  holds
$$
\bigl((x \Phi_{B(y)})\Phi_{B(z)}\bigr) \cdot z = \Big(\bigl(x \Phi_{B(z)} \bigr) \Phi_{B((y \Phi_{B(z)}) \cdot z)}\Big) \\
\cdot \bigl(z  \Phi_{B((y \Phi_{B(z)}) \cdot z)}),
$$
then $(X, \circ)$ is a rack.

Moreover, if $(X, A, \Phi, B)$ is a RRB-quandle  and  $x \Phi_{B(x)}=x$  for any $x \in X$, then
$(X, \circ)$ is a  quandle.
\end{itemize}
\end{theorem}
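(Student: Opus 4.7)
The strategy is to verify the rack (and quandle) axioms for $(X,\circ)$ directly; part (i) is essentially immediate, and part (ii) reduces to a bookkeeping calculation pinned down exactly by the displayed compatibility hypothesis.

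For part (i), if $\Phi_a=\id$ for every $a\in A$, then $x\Phi_{B(y)}=x$ for all $x,y\in X$, and the defining identity~\eqref{ZZxG2355H} collapses to $B(x)*B(y)=B(x\cdot y)$, so $B$ is a rack homomorphism; moreover $x\circ y=(x\Phi_{B(y)})\cdot y=x\cdot y$ by inspection.

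For part (ii), I would check axioms (Q2) and (Q3) for $(X,\circ)$ in turn. Axiom (Q2) is the assertion that right multiplication by $y$ in $(X,\circ)$, namely $z\mapsto (z\Phi_{B(y)})\cdot y$, is a bijection of $X$; this is clear since it is the composition of the automorphism $\Phi_{B(y)}\in\Aut(X,\cdot)$ with the bijection $S_y\colon w\mapsto w\cdot y$ supplied by~\eqref{FLVVr}. For (Q3) I would expand both sides using the definition of $\circ$, the fact that $\Phi_{B(z)}\in\Aut(X,\cdot)$, and the right self-distributivity of $(X,\cdot)$. A direct expansion yields
\[
(x\circ y)\circ z \;=\; \bigl(((x\Phi_{B(y)})\Phi_{B(z)})\cdot z\bigr)\cdot\bigl((y\Phi_{B(z)})\cdot z\bigr),
\]
while, setting $w=(y\Phi_{B(z)})\cdot z=y\circ z$,
\[
(x\circ z)\circ(y\circ z) \;=\; \bigl(((x\Phi_{B(z)})\Phi_{B(w)})\cdot(z\Phi_{B(w)})\bigr)\cdot w.
\]
The two right-hand factors coincide (both equal $w=(y\Phi_{B(z)})\cdot z$), so (Q3) for $\circ$ reduces to equality of the two left-hand factors, which after substituting $w=(y\Phi_{B(z)})\cdot z$ is exactly the compatibility identity in the statement. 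The only real obstacle here is notational: one must carry out the expansions carefully enough to recognize that the hypothesis is written precisely in the form needed to align the two sides.

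Finally, for the quandle statement, the additional assumption $x\Phi_{B(x)}=x$ combined with (Q1) for $(X,\cdot)$ gives $x\circ x=(x\Phi_{B(x)})\cdot x=x\cdot x=x$, so (Q1) holds for $\circ$ as well and $(X,\circ)$ is a quandle.
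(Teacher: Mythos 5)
Your proposal is correct and follows essentially the same route as the paper: (Q2) via the composition of $\Phi_{B(y)}$ with right multiplication $S_y$, (Q3) by expanding both sides with the automorphism property and right self-distributivity and then matching the left factors (the paper phrases this via right cancellation, which is the same reduction), and (Q1) from $x\Phi_{B(x)}=x$ together with idempotency of $(X,\cdot)$.
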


\begin{proof} (i) Is evident.

(ii) Let us check the  quandle axioms for the groupoid $(X, \circ)$. To get (Q1), we have to show that
\[
x \circ x = (x  \Phi_{B(x)}) \cdot x = x,   \qquad\qquad  (x \in X)
\]
which  is possible only if    $x \Phi_{B(x)}= x$,  i.e.  the automorphism $\Phi_{B(x)}$ fixes each $x \in X$.

Since $\Phi_{B(a)}$ is an automorphism and $(X, \cdot)$ is a rack, for any $a, b \in X$ there exists a unique $x:= \Phi^{-1}_{B(a)}(b \bar{\cdot} a) \in X$ such that $x \circ a = x  \Phi_{B(a)}) \cdot a  = b$. Hence  (Q2) holds.

Now, by direct calculation we have
\[
\begin{split}
(x \circ y) \circ z &= \bigl((x \Phi_{B(y)}) \cdot y\bigr)  \circ z \\
&= \bigl(((x \Phi_{B(y)}) \cdot y\bigr) \Phi_{B(z)}) \cdot z\qquad\qquad  \mbox{($\Phi_{B(z)}$ is an automorphism)}\\
& =  \bigl((x \Phi_{B(y)})\Phi_{B(z)} \cdot (y \Phi_{B(z)})\bigr) \cdot z \\
& = \bigl(((x \Phi_{B(y)})\Phi_{B(z)}) \cdot z \bigr) \cdot \bigl((y \Phi_{B(z)}) \cdot z \bigr);\\
(x \circ z) \circ (y \circ z) &= \bigl((x \Phi_{B(z)}) \cdot z\bigr)  \circ \bigl((y \Phi_{B(z)}) \cdot z\bigr)\\
 &= \bigl(((x \Phi_{B(z)}) \cdot z\bigr) \Phi_{B((y \Phi_{B(z)}) \cdot z)}) \cdot \bigl((y \Phi_{B(z)}) \cdot z\bigr).
\end{split}
\]
Using the right cancellation (axiom (Q2)), it follows that
\[
\begin{split}
\bigl((x \Phi_{B(y)})\Phi_{B(z)}\bigr) \cdot z&= \bigl((x \Phi_{B(z)}) \cdot z\bigr) \Phi_{B((y \Phi_{B(z)}) \cdot z)}\\
&= \bigl(x \Phi_{B(z)} \bigr) \Phi_{B((y \Phi_{B(z)}) \cdot z)}) \cdot \bigl(z  \Phi_{B((y \Phi_{B(z)}) \cdot z)})
\end{split}
\]
because  $\Phi_{B((y \Phi_{B(z)}) \cdot z)}$ is an automorphism. Consequently,   (Q2) holds. \end{proof}

\begin{remark}
From the proof follows that the equality in (ii) is equivalent to the equality
\[
\bigl((x \Phi_{B(y)})\Phi_{B(z)}\bigr) \cdot z = \bigl((x \Phi_{B(z)}) \cdot z\bigr) \Phi_{B((y \Phi_{B(z)}) \cdot z)}
\]
\end{remark}

From  Theorem \ref{Theor:7.5} we have the following.

\begin{corollary} \label{Cor}
Let $(X, \cdot, B)$ be an  RB-rack. Define on  $X$ an operation:
\[
x \circ y := (x \cdot  B(y)) \cdot y,   \qquad\qquad  (x,y  \in X).
\]
If  for any $x, y, z \in X$  holds
\[
\begin{split}
\bigl((x \cdot B(y)) & \cdot B(z)\bigr) \cdot z \\
&= \bigl((x \cdot B(z)) \cdot z \bigr) \cdot B((y \cdot B(z)) \cdot z),
\end{split}
\]
then $(X, \circ)$ is a rack.

Moreover, if $(X, \cdot, B)$ is an  RB-quandle  and for any $x \in X$ holds  the equation $x \cdot B(x)=x$, then
the groupoid $(X, \circ)$ is  a  quandle.
\end{corollary}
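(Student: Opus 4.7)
The plan is to deduce this corollary as a direct specialization of Theorem~\ref{Theor:7.5} to the case $A = X$, with $*$ on $A$ being the rack operation $\cdot$ on $X$, and with the rack homomorphism $\Phi : X \to \Conj(\Aut(X))$ chosen to send each $a \in X$ to the inner automorphism $S_a \in \Inn(X)$ defined in~\eqref{FLVVr}, namely $x\, \Phi_a = x \cdot a$. The definition of an RB-rack (Definition~\ref{Def:11}) is precisely the specialization of the RRB-identity~\eqref{ZZxG2355H} under this choice, so the data $(X, X, \Phi, B)$ forms an RRB-rack whenever $(X, \cdot, B)$ is an RB-rack.

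First I would verify that $\Phi : X \to \Conj(\Aut(X))$ is indeed a rack homomorphism. Axiom (Q3) rewritten as $S_b \circ S_a = S_{a \cdot b} \circ S_b$ yields $S_{a \cdot b} = S_b\, S_a\, S_b^{-1}$, which is the conjugation rack operation on $\Aut(X)$; thus $\Phi$ is a rack homomorphism as required. Once this is in hand, I would substitute $\Phi_{B(y)}(x) = x \cdot B(y)$ into the operation
\[
x \circ y = (x\, \Phi_{B(y)}) \cdot y
\]
from Theorem~\ref{Theor:7.5} and observe that it collapses to $x \circ y = (x \cdot B(y)) \cdot y$, matching the corollary's definition.

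Next I would translate the hypothesis of Theorem~\ref{Theor:7.5}(ii) under the same substitution. It is more convenient here to apply the equivalent form recorded in the Remark following the theorem, namely
\[
((x\, \Phi_{B(y)})\Phi_{B(z)}) \cdot z = ((x\, \Phi_{B(z)}) \cdot z)\, \Phi_{B((y\, \Phi_{B(z)}) \cdot z)}.
\]
Replacing each $x\, \Phi_{B(\cdot)}$ by $x \cdot B(\cdot)$ and each $u\, \Phi_{B(v)}$ by $u \cdot B(v)$ turns this identity into
\[
((x \cdot B(y)) \cdot B(z)) \cdot z = ((x \cdot B(z)) \cdot z) \cdot B((y \cdot B(z)) \cdot z),
\]
which is exactly the hypothesis of the corollary. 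Thus Theorem~\ref{Theor:7.5}(ii) applies and $(X, \circ)$ is a rack.

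Finally, for the quandle statement I would specialize the extra condition $x\, \Phi_{B(x)} = x$ of Theorem~\ref{Theor:7.5}(ii), which becomes $x \cdot B(x) = x$; this is the hypothesis of the second part of the corollary. The main technical point, which is really the only thing requiring care rather than a genuine obstacle, is bookkeeping: tracking that the RRB equation, the defining identity of an RB-rack, and the hypothesis of the corollary align correctly under the substitution $\Phi_a = S_a$, and checking that $\Phi$ is a rack homomorphism from (Q3) as above. Once that translation is in place, the result is immediate from Theorem~\ref{Theor:7.5}.
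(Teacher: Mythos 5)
Your proposal is correct and follows exactly the paper's route: the corollary is obtained as the specialization of Theorem~\ref{Theor:7.5} (via its Remark) to $A = X$ with $\Phi$ sending $a$ to the inner automorphism $S_a$, so that $x\,\Phi_{B(y)} = x \cdot B(y)$ and all hypotheses translate as you describe. The only caveat is a harmless convention point in your verification that $\Phi$ is a rack homomorphism: with the paper's right-action notation, (Q3) gives $S_{a\cdot b} = S_b^{-1} S_a S_b$, i.e.\ a homomorphism into $\Conj(\Aut(X))$ exactly as Definition~\ref{Def:11} requires.
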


Some RB-operators  are presented in the following.

\begin{example} \label{Prop:5}
\begin{itemize}
\item[(i)] Let $X$ be a quandle,    $p \in X$.  Then the map $B_p(x):= p$\quad  ($x \in X$)  is an RB-operator.

\item[(ii)] For  a trivial quandle $\T$ any map $B:   T \to T$ is  an  RB-operator.
\end{itemize}
\end{example}

The next properties of  RB-operators  are evident.

\begin{proposition} \label{EvPr}
Let $(X, \cdot,  B)$ be a  Rota--Baxter rack (quandle). Then
\begin{itemize}
\item[(i)] the image $\im B$ of $B$ is a rack (quandle);

\item[(ii)] if  $(X_1,B_1)$ and $(X_1,B_1)$ are  two  Rota--Baxter  racks, then the map
\[
B((x_1, x_2)):=  (B_1(x_1), B_2(x_2)), \qquad (x_i \in X_i)
\]
is a  Rota--Baxter operator on $X: = X_1 \times X_2$.
\end{itemize}
\end{proposition}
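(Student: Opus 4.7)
The plan is to verify both statements directly from the defining identity
$$B(x) \cdot B(y) = B\bigl((x \cdot B(y)) \cdot y\bigr)$$
of an RB-operator on a rack, which is the only tool available at this point in the paper.

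For part (i), I would first observe that the RB-identity itself shows closure: for every $B(x), B(y) \in \im B$, the product $B(x) \cdot B(y)$ is displayed as $B$ of something, hence lies in $\im B$. Axiom (Q3) is a universal identity that holds in all of $X$, so it is automatically inherited by any subset closed under $\cdot$. The only nontrivial point is (Q2) on $\im B$: given $B(u), B(v) \in \im B$, I must exhibit a \emph{unique} $z \in \im B$ with $z \cdot B(v) = B(u)$. Applying (Q2) twice in the ambient rack $X$ produces a unique $w \in X$ with $(w \cdot B(v)) \cdot v = u$; then $z := B(w) \in \im B$ satisfies the desired equation by the RB-identity, and uniqueness of $z$ within $\im B$ is inherited from uniqueness in $X$ (since $R_{B(v)}$ is already a bijection on $X$). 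In the quandle case, idempotence $B(x) \cdot B(x) = B(x)$ is inherited from $X$, so $\im B$ is a subquandle.

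For part (ii), the plan is a routine componentwise calculation. Using that the rack structure on $X_1 \times X_2$ is the coordinatewise operation, I would expand
$$B(x_1,x_2) \cdot B(y_1,y_2) = \bigl(B_1(x_1) \cdot_1 B_1(y_1),\; B_2(x_2) \cdot_2 B_2(y_2)\bigr),$$
apply the RB-identity for $B_1$ and $B_2$ in each coordinate, and reassemble to obtain $B\bigl(((x_1,x_2) \cdot B(y_1,y_2)) \cdot (y_1,y_2)\bigr)$. No subtlety is expected.

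In line with the paper's description of these facts as \emph{evident}, I anticipate no serious obstacle. The only point that needs slight care is in (i): one must produce the preimage required by (Q2) \emph{inside} $\im B$, not merely in $X$, and the RB-identity is precisely what delivers such a preimage.
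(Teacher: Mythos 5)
Your proof is correct and is exactly the routine verification the paper leaves implicit by calling these properties ``evident'': closure and (Q3) are immediate from the RB-identity, and part (ii) is the componentwise computation. You rightly flag the only point needing care, namely producing the (Q2)-preimage inside $\im B$, and your use of the RB-identity with $w$ chosen so that $(w \cdot B(v)) \cdot v = u$ handles it correctly.
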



\section{Rack extensions and averaging operators on racks} \label{Aver}

In the first part of the present section we suggest some approach to solution of the problem of a description  semi-direct product of quandles (see  \cite[Problem 4.15]{BNS}).   A particular case of this construction was introduced in \cite{Kin}.  Note that,  rack extensions were investigated in  \cite{AG, BCK, BS, CENS, J}.

\begin{theorem} \label{rack}
Let $(X, \cdot)$ and  $(A, *)$ be  two racks and let $\Phi:   A \to \Conj(\Aut(X))$ be a rack homomorphism. On the set $Y:= A \times X$  we  define the following operation:
\[
(a, x) \circ (b, y):= (a * b, x \Phi_b),\qquad\qquad  (a, b \in A, x, y \in X)
\]
in which  $x \Phi_b$ means the action of the automorphism $\Phi_b =  \Phi (b)$ on the element $x$.

The gropoid  $(Y, \circ)$ is a rack. Additionally, if  $X$ and $A$ are quandles and $\Phi_a$ lies in the kernel of $\Phi$  for any $a \in A$, then $(Y, \circ)$ is a quandle.
\end{theorem}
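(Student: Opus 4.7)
The plan is to verify the rack axioms (Q2) and (Q3) for $(Y, \circ)$ directly, and then to inspect (Q1) separately in the quandle case. Two facts will do all the work: each $\Phi_b$ lies in $\Aut(X)$ and is therefore bijective on $X$, and the hypothesis that $\Phi:(A,*) \to \Conj(\Aut(X))$ is a rack homomorphism translates into the conjugation identity
$$\Phi_{b*c} = \Phi_c^{-1}\Phi_b\Phi_c, \qquad (b,c \in A).$$

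First I would check (Q2). Given $(a,x), (b,y) \in Y$, I look for a unique $(c,z)$ with $(c,z) \circ (b,y) = (a,x)$, which unravels to two independent conditions $c*b = a$ and $z\Phi_b = x$. The first has a unique solution $c\in A$ by (Q2) applied to $(A,*)$, and the second has the unique solution $z = x\Phi_b^{-1}$ because $\Phi_b$ is an automorphism of $X$.

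Next I would expand both sides of (Q3). Writing $u=(a,x)$, $v=(b,y)$, $w=(c,z)$, the definitions give
$$(u\circ v)\circ w = \bigl((a*b)*c,\; (x\Phi_b)\Phi_c\bigr), \qquad (u\circ w)\circ(v\circ w) = \bigl((a*c)*(b*c),\; (x\Phi_c)\Phi_{b*c}\bigr).$$
The first coordinates coincide by (Q3) in $A$. For the second coordinates I substitute the conjugation identity to obtain $(x\Phi_c)\Phi_{b*c} = x\Phi_c \Phi_c^{-1}\Phi_b\Phi_c = x\Phi_b\Phi_c = (x\Phi_b)\Phi_c$, which finishes the rack part.

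For the quandle addendum, (Q1) for $Y$ reduces to $(a,x)\circ(a,x) = (a*a,\, x\Phi_a) = (a, x\Phi_a)$ (using (Q1) on $A$), and demanding this equal $(a,x)$ is equivalent to $x\Phi_a = x$ for every $a$ and $x$, i.e.\ $\Phi_a = \id_X$, which I interpret as the stated kernel hypothesis on $\Phi$. The only genuine obstacle in the whole argument is bookkeeping: keeping the right-action convention $(x\Phi_b)\Phi_c = x(\Phi_b\Phi_c)$ consistent with the conjugation $\Phi_c^{-1}\Phi_b\Phi_c$ dictated by $\Conj_1$, so that the identity is applied on the correct side of $x$. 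Once this convention is fixed, the whole verification is mechanical.
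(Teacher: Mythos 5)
Your proposal is correct and follows essentially the same route as the paper: (Q2) by solving the two coordinates separately using (Q2) in $A$ and bijectivity of $\Phi_b$, (Q3) by expanding both sides and invoking $\Phi_{b*c}=\Phi_c^{-1}\Phi_b\Phi_c$ from the rack homomorphism into $\Conj(\Aut(X))$, and (Q1) reduced to $x\Phi_a=x$, i.e.\ $\Phi_a=\id_X$, which is also how the paper's own proof reads the kernel hypothesis. No gaps.
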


\begin{proof}  Let us  check the rack axioms. The equation $u \circ v = w$ with unknown $u = (a, x) \in Y$ and given  $v = (b, y), w= (c, z) \in Y$ has a unique solution. Indeed, it  has the  form
\[
(a * b, x \Phi_b) = (c, z).
\]
The element   $a = c \bar{*} b$ is a unique solution for the equation $a * b = c$, because $A$ is a rack. Furthermore,  $\Phi_b$ is an automorphism, so the equation $x \Phi_b = c$ has a unique solution $x = c \Phi^{-1}_b$. Hence  the axiom (Q2) holds.

Now, it is easy to check that
\[
\begin{split}
(u \circ v) \circ w &= (a * b, x \Phi_b) \circ (c, z) = \left((a * b) * c, (x \Phi_b) \Phi_c \right),\\
(u \circ  w) \circ (v \circ w) &= (a * c,  x \Phi_c) \circ (b * c,  y \Phi_c) = \left( (a * c) * (b * c),  (x \Phi_c) \Phi_{b*c} \right),
\end{split}
\]
so we have  to prove that $(u \circ v) \circ w = (u \circ  w) \circ (v \circ w)$ for any $u = (a, x)$, $v = (b, y)$, $w = (c, z) \in Y$.
Since $A$ is a rack,  holds the equality $(a * b) * c =  (a * c) * (b * c)$. Using the fact that  $\Phi$ is a rack homomorphism to a conjugacy quandle,  we obtain  that
\[
\Phi_{b*c} = \Phi_c^{-1} \Phi_b \Phi_c\qquad \text{and}\qquad  (x \Phi_b) \Phi_c = (x \Phi_c) \Phi_{b*c},
\]
so   (Q3) holds.

If $X$ and $A$ are quandles and $\Phi_a$ lies in the kernel of $\Phi$, then  $u \circ u = (a * a, x \Phi_a) = (a, x\; \id) = u$, so the axiom  (Q1) holds.
\end{proof}

\begin{definition} \label{Def:7NNNzaq}
Let the conditions of Theorem \ref{rack} hold.
The rack $A \ltimes_{\Phi} X:=( A\times X, \circ)$ is called  a {\it semi-direct product of $X$ and $A$ with respect to $\Phi$}.

In particular, if $A = \Conj(\Aut(X))$ and $\Phi = \id$, then $\Hol(X):=\Conj(\Aut(X)) \ltimes_{\Phi} X$ is called  the {\it holomorph} of $X$.
\end{definition}

Let us define a relative averaging operator on racks.

\begin{definition}\label{Def:8:TTcvdE}
Let $(X, \cdot)$ and $ (A, *)$ be  racks and let $\Phi:   A \to \Conj(\Aut(X))$ be a rack homomorphism.
A map $B:   X \to A$ is called  a {\it relative averaging operator} on $X$ with respect to $(A, \Phi)$, if
\[
B(x) * B(y) = B(x \Phi_{B(y)}),\qquad (x, y \in X)
\]
in which $x \Phi_{B(y)}$ is the image of the element $x$ under the action of the automorphism $\Phi_{B(y)}$. The quadruple $(X, A, \Phi, B)$ is called a {\it relative averaging rack} (RA-rack).
\end{definition}

An analogue of the corresponding theorems for Rota--Baxter operators and relative Rota--Baxter operators for groups (see \cite{GLS, JSZ}) is the following.

\begin{theorem}\label{MMgd}
Let $(X, \cdot)$ and  $(A, *)$ be  two racks and let $\Phi:   A \to \Conj(\Aut(X))$ be a rack homomorphism. Let $B: X \to A$ be a map and let
\[
\Gamma_B:= \{(B(x), x)\mid x \in X \} \subseteq A \ltimes_{\Phi} X.
\]
The map $B: X \to A$ is   a relative averaging operator on $X$ with respect to $(A, \Phi)$ if and only if $\Gamma_B$  is a subrack of $A \ltimes_{\Phi} X$.
\end{theorem}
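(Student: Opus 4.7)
The plan is simply to unwind the definition of the semi-direct product from Theorem~\ref{rack} and read off the closure condition componentwise. By construction of $A \ltimes_{\Phi} X$, for any $x, y \in X$ we have
\[
(B(x), x) \circ (B(y), y) = \bigl( B(x) * B(y),\; x \Phi_{B(y)} \bigr).
\]
So the subrack closure condition for $\Gamma_B$ asks precisely that this pair belong to $\Gamma_B$, i.e.\ that there exists some $z \in X$ with $(B(z), z) = (B(x) * B(y), x \Phi_{B(y)})$.

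For the forward implication, I would assume that $B$ is a relative averaging operator, so $B(x) * B(y) = B(x \Phi_{B(y)})$. Taking $z := x \Phi_{B(y)}$ then matches both coordinates, giving $(B(x), x) \circ (B(y), y) = (B(z), z) \in \Gamma_B$. Hence $\Gamma_B$ is closed under $\circ$, and is therefore a subrack.

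For the reverse implication, assume $\Gamma_B$ is a subrack of $A \ltimes_{\Phi} X$. Then for arbitrary $x, y \in X$ the element $(B(x) * B(y), x \Phi_{B(y)})$ is of the form $(B(z), z)$ for some $z \in X$. Comparing second coordinates forces $z = x \Phi_{B(y)}$, and then the first coordinate equality delivers $B(x) * B(y) = B(x \Phi_{B(y)})$, which is exactly Definition~\ref{Def:8:TTcvdE}.

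There is no real obstacle here; the content is bookkeeping in the semi-direct product. The only point worth a brief comment is that the axiom~(Q2) in $A \ltimes_{\Phi} X$ guarantees that closure of $\Gamma_B$ under $\circ$ already suffices (closure under $\bar{\circ}$ is automatic, since $\Phi_{B(y)}$ is an automorphism of $X$ and $A$ is a rack, so the inverse operation is well-defined and the same averaging identity, rewritten with $x$ replaced by $x \Phi_{B(y)}^{-1}$, gives closure under $\bar{\circ}$). Thus the two conditions really are equivalent.
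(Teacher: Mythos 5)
Your proof is correct and follows essentially the same route as the paper's: compute $(B(x),x)\circ(B(y),y)=(B(x)*B(y),\,x\Phi_{B(y)})$ and observe that membership in $\Gamma_B$ is equivalent to the identity $B(x)*B(y)=B(x\Phi_{B(y)})$. Your closing remark verifying closure under the inverse operation via $w=x\Phi_{B(y)}^{-1}$ is a small extra check the paper omits, and it is accurate.
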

\begin{proof} Evidently,  $\Gamma_B$ is a subset of $A \ltimes_{\Phi} X$, so   we have
\[
(B(x), x) \circ (B(y), y):= (B(x) * B(y), x \Phi_{B(y)}),\qquad (x, y\in X).
\]
But   $(B(x), x) \circ (B(y), y)\in  (\Gamma_B,\circ)$  if and only if $B(x) * B(y) = B(x \Phi_{B(y)})$ for any $x, y \in X$.
\end{proof}

If $X = A$ and $\Phi : X \to \Inn(X)$ such that $\Phi_{B(y)} = S_{B(y)}$ is the inner automorphism of $X$, then we define the following.

\begin{definition}\label{Def:9KKjfm}
Let $(X, \cdot)$  be  a rack and let     $B: X \to X$ be a map. If
\[
B(x) \cdot B(y) = B(x \cdot B(y)),\qquad (x, y \in X)
\]
then $B$ is called  a  {\it right  averaging operator} on $X$.  If
\[
B(x) \cdot B(y) = B(B(x) \cdot y),\qquad (x, y \in X)
\]
then $B$ is  called a {\it left averaging operator} on $X$.
\end{definition}

In the sequel of our  article   we  mainly consider  right averaging operators which is   called simply {\it averaging operators}.   A pair $(X, B)$ is called an {\it right (left) averaging rack}, if  $X$ is a rack and $B$ is a right (left) averaging operator. We call a right  averaging rack simply an averaging rack.

\begin{remark}
In  racks the multiplication on the right is an automorphism,  but multiplication on the left, in general, is not. Consequently,   the theory of   right averaging operators is different from the theory of left averaging operators. It is interesting to construct a theory of left averaging operators.
\end{remark}

Some properties of averaging operators on racks are similar to the properties of RB-operators on groups (see Proposition \ref{Prop:2}), which shows the following.

\begin{theorem} Let   $(X,  B)$ be  an averaging  rack and
let   $(X, \circ_B)$ be a  groupoid, in which
\begin{equation}\label{QQQvf}
x \circ_B y := x \cdot B(y),\qquad ( x, y \in X).
\end{equation}
The following conditions hold:
\begin{itemize}
\item[(i)]  $(X, \circ_B)$ is a rack. Moreover, if $x \cdot B(x) = x$ for all $x \in X$, then $(X, \circ_B)$ is a quandle;

\item[(ii)]  $B$ is an averaging operator on the rack $(X, \circ_B)$;

\item[(iii)] the map  $B:   (X, \circ_B) \to (X, \cdot)$ is a  homomorphism.
\end{itemize}
\end{theorem}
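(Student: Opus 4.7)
The plan is to verify each clause by direct computation, relying only on the averaging identity $B(a) \cdot B(b) = B(a \cdot B(b))$ and the rack axioms of $(X, \cdot)$.

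First I would establish (i). For axiom (Q2) in $(X, \circ_B)$: given $y, w \in X$, the equation $z \circ_B y = w$ reads $z \cdot B(y) = w$, and since $(X, \cdot)$ is a rack, this has the unique solution $z = w \mathbin{\bar{\cdot}} B(y)$. For axiom (Q3) the computation is
\[
(x \circ_B y) \circ_B z = (x \cdot B(y)) \cdot B(z),
\]
while
\[
(x \circ_B z) \circ_B (y \circ_B z) = (x \cdot B(z)) \cdot B(y \cdot B(z)) = (x \cdot B(z)) \cdot (B(y) \cdot B(z)),
\]
where the last equality uses the averaging identity $B(y \cdot B(z)) = B(y) \cdot B(z)$. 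Self-distributivity of $\cdot$ in $(X, \cdot)$ applied to $x, B(y), B(z)$ identifies the two expressions, proving (Q3). The quandle hypothesis $x \cdot B(x) = x$ yields (Q1) at once: $x \circ_B x = x \cdot B(x) = x$.

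For (ii), I would compute both sides of the averaging identity for $(X, \circ_B)$ separately:
\[
B(x) \circ_B B(y) = B(x) \cdot B(B(y)) \quad\text{and}\quad B(x \circ_B B(y)) = B(x \cdot B(B(y))).
\]
The averaging axiom of $B$ on $(X, \cdot)$, applied to $x$ and $B(y)$, gives $B(x) \cdot B(B(y)) = B(x \cdot B(B(y)))$, which is exactly the claim. For (iii) the computation is even more direct:
\[
B(x \circ_B y) = B(x \cdot B(y)) = B(x) \cdot B(y),
\]
where the second equality is the averaging identity itself.

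The only step requiring any care is (Q3); everything else is a one-line manipulation. There is no genuine obstacle, because the averaging identity is precisely what is needed to commute $B$ past a $\cdot$-product, and this is the single feature of the situation that permits the rack distributivity of $(X, \cdot)$ to transfer over to $(X, \circ_B)$.
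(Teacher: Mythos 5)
Your proposal is correct and follows essentially the same route as the paper: (Q2) by right-cancellation in $(X,\cdot)$, (Q3) by self-distributivity of $\cdot$ combined with the averaging identity $B(y)\cdot B(z)=B(y\cdot B(z))$, (Q1) from the extra hypothesis, and (ii), (iii) by the same one-line substitutions. No differences worth noting beyond the direction in which you apply the averaging identity in (Q3), which is immaterial.
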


\begin{proof}
(i)  Let us check the rack axioms.
The equation $x \circ_B B(a) = b$ has a unique solution $x = b \, \bar{\cdot} \, B(a)$, i.e.  (Q2) holds.

From the equations
\[
\begin{split}
(x \circ_B y) \circ_B z &= (x \cdot B(y)) \circ_B z = (x \cdot B(y)) \cdot B(z) = (x \cdot B(z)) \cdot (B(y) \cdot B(z))\\
& =  (x \cdot B(z)) \cdot (B(x \cdot B(y));\qquad (\text{by definition of $B$})\\
(x \circ_B z) \circ_B (y \circ_B z) &= (x \cdot B(z)) \circ_B (y \cdot B(z)) = (x \cdot B(z)) \cdot B(y \cdot B(z))
\end{split}
\]
follows that $(x \circ_B y) \circ_B z = (x \circ_B z) \circ_B (y \circ_B z)$ for any $x, y, z \in X$, so  (Q3) holds.

Further, by the assumption $x \circ_B x = x \cdot B(x) = x$, so  (Q1) holds.

\noindent
(ii) An averaging operator $B$ on $(X, \circ_B)$  must satisfies the identity
\[
B(x) \circ_B B(y) = B(x \circ_B B(y)),\qquad (x, y \in X)
\]
which is by \eqref{QQQvf} has the form
\[
B(x) \cdot B\left(B(y)\right) = B\left(x \cdot B\left(B(y)\right) \right), \qquad (x, y \in X).
\]
If we put $z = B(y)$, we get $B(x) \cdot B\left(z \right) = B\left(x \cdot B(z) \right)$.
Since $B$ is an averaging operator on $(X, \cdot)$, this identity holds.

\noindent
(iii) By the definition of $B$ and operation $\circ_B$, we have
\[
B(x) \cdot B\left(y \right) = B\left(x \circ_B y \right),\qquad (x, y \in X).
\]
It is means that $B$ is a homomorphism. \end{proof}

\begin{problem}
Let   $(X, \cdot, B)$ be  an averaging  rack. Then $(X, \circ_B)$ is a rack, where $\circ_B$ as in  \eqref{QQQvf}.  The set  $(X,\cdot, \circ_B)$ with two operations $\cdot$ and $\circ_B$,    is an  analogue of a skew brace (see~\cite{GV}). Which axioms connect the rack operations $\cdot$ and $\circ_B$?
\end{problem}


\section{Properties of averaging operators on racks} \label{PropAver}

The next properties of averaging operators are evident.

\begin{lemma} \label{EvPr}
Let $(X, \cdot,  B)$ be an averaging rack (quandle).
\begin{itemize}
\item[(i)] the image $\im B$ of $B$ is a rack (quandle);

\item[(ii)] if  $(X_1,B_1)$ and $(X_1,B_1)$ are  two averaging  racks, then the map
\[
B((x_1, x_2)):=  (B_1(x_1), B_2(x_2)), \qquad (x_i \in X_i)
\]
is an averaging operator on $X: = X_1 \times X_2$.
\end{itemize}
\end{lemma}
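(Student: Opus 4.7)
The plan is to verify both claims by direct computation from the averaging identity $B(x) \cdot B(y) = B(x \cdot B(y))$.

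For (i), closure of $\im B$ under $\cdot$ is immediate: for any $B(x), B(y) \in \im B$ one has $B(x) \cdot B(y) = B(x \cdot B(y)) \in \im B$. The step that needs care is closure under the inverse operation $\bar{\cdot}$ so that the subset actually satisfies (Q2). Recalling from \eqref{FLVVr} that $S_a : X \to X$, $z \mapsto z \cdot a$, is an automorphism for every $a \in X$, I would rewrite the averaging identity as the intertwining relation
\[
S_{B(y)} \circ B = B \circ S_{B(y)}, \qquad y \in X.
\]
Applying both sides to all of $X$ gives $S_{B(y)}(\im B) = B(S_{B(y)}(X)) = B(X) = \im B$, so $S_{B(y)}$ restricts to a bijection of $\im B$ onto itself. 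Consequently, for any $a, b \in \im B$, the unique element $a \bar{\cdot} b \in X$ provided by axiom (Q2) in $X$ is the preimage of $a$ under $S_b$ and therefore lies in $\im B$. Axiom (Q3) for $\im B$ is inherited from $X$; in the quandle case, (Q1) is inherited as well since $B(x) \cdot B(x) = B(x)$ already holds in $X$.

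For (ii), I would just substitute the map $B(x_1, x_2) = (B_1(x_1), B_2(x_2))$ into the averaging identity for the product rack $X_1 \times X_2$, whose operation is componentwise. The left-hand side becomes $(B_1(x_1) \cdot B_1(y_1), B_2(x_2) \cdot B_2(y_2))$, while the right-hand side becomes $(B_1(x_1 \cdot B_1(y_1)), B_2(x_2 \cdot B_2(y_2)))$, and the two agree by the averaging identity in each factor. The quandle case requires no extra work since componentwise idempotency is preserved.

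The main (only) obstacle is the $\bar{\cdot}$-closure in part (i); the intertwining $S_{B(y)} \circ B = B \circ S_{B(y)}$ is what upgrades mere closure under $\cdot$ to the full subrack property. Everything else reduces to unpacking definitions.
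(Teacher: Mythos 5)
Your proposal is correct. The paper offers no argument for this lemma at all (it is introduced with ``The next properties of averaging operators are evident''), and your computation supplies exactly the details one would want: part (ii) and the closure of $\im B$ under $\cdot$ are immediate substitutions, while the only genuinely non-obvious point --- that $\im B$ satisfies (Q2), i.e.\ is closed under $\bar{\cdot}$ --- is handled correctly by reading the averaging identity as the intertwining relation $S_{B(y)} \circ B = B \circ S_{B(y)}$ and noting that the automorphism $S_{B(y)}$ therefore restricts to a bijection of $\im B$, so the unique solution of $z \cdot b = a$ in $X$ already lies in $\im B$ when $a, b \in \im B$.
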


Let $X_1$ and $X_2$ be quandles  and let $X_1 \times X_2 := \{ (x', x'')\mid x' \in X_1, x'' \in X_2 \}$
be their  Cartesian  product with a component-wise multiplication. To define operators projectivizations, we fix elements  $y_0 \in X_1$, $z_0 \in X_2$, and define the following maps:
\begin{equation}\label{YYcdJJ}
\begin{split}
P_1&:  X_1 \times X_2 \to  X_1 \times X_2, \quad P_1\big((x', x'')\big) = (x', z_0); \\
P_2&:  X_1 \times X_2 \to  X_1 \times X_2, \quad P_2\big((x', x'')\big) = (y_0, x'').
\end{split}
\end{equation}
Clearly, each  $P_i(X) \cong X_i$  and  does not depend on the choice of  $y_0$ and $z_0$, respectively.

\begin{remark}
 In the case of racks, the image  $P_i(X)$ can be non-isomorphic to $X_i$.
\end{remark}

If  $G = G_1 \times G_2$ is a direct product of groups $G_1$ and $G_2$, then the projections $P_i:   G \to G_i$ ($i=1, 2$) are Rota--Baxter operators. A similar result for quandles is the following.

\begin{proposition}
Let $X_1$ and $X_2$ be quandles and let $X:= X_1 \times X_2$  be their  Cartesian product with a component-wise multiplication. Each  operator $P_i:   X \to X_i$  (see \eqref{YYcdJJ}) is an  averaging operator on $X$.
\end{proposition}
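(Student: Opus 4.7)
The plan is to verify the averaging identity
\[
P_i(u) \cdot P_i(v) = P_i\bigl(u \cdot P_i(v)\bigr), \qquad (u, v \in X)
\]
directly from the definitions, exploiting idempotency of the distinguished base points $y_0$ and $z_0$. Since the two cases are entirely symmetric, I would handle $P_1$ in detail and note that $P_2$ follows \emph{mutatis mutandis}.

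First I fix arbitrary $u = (x', x'')$ and $v = (y', y'')$ in $X = X_1 \times X_2$. For the left--hand side, I compute, using the component--wise multiplication on $X$ and axiom (Q1) applied to $z_0 \in X_2$,
\[
P_1(u) \cdot P_1(v) = (x', z_0) \cdot (y', z_0) = (x' \cdot y',\ z_0 \cdot z_0) = (x' \cdot y',\ z_0).
\]
For the right--hand side, I first form the product $u \cdot P_1(v) = (x' \cdot y',\ x'' \cdot z_0)$ and then apply $P_1$, which by its very definition discards the second component and replaces it with $z_0$:
\[
P_1\bigl(u \cdot P_1(v)\bigr) = P_1\bigl( (x' \cdot y',\ x'' \cdot z_0) \bigr) = (x' \cdot y',\ z_0).
\]
The two expressions agree, so the averaging identity holds for $P_1$. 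An identical argument, with the roles of the two coordinates interchanged and using the idempotency $y_0 \cdot y_0 = y_0$ in $X_1$, establishes the averaging identity for $P_2$.

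There is no real obstacle here: the only non--trivial ingredient is the quandle axiom (Q1), which is precisely what makes the constant assignment to a fixed base point compatible with the rack operation on the passive coordinate. I would remark at the end that this argument genuinely uses (Q1) and therefore does not extend verbatim to racks; this is consistent with the preceding remark that for racks the image $P_i(X)$ need not even be isomorphic to $X_i$.
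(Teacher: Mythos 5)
Your proof is correct and is essentially the paper's own argument: a direct computation of both sides of the averaging identity, using (Q1) for $z_0$ on the left and the definition of $P_1$ (which overwrites the second coordinate with $z_0$) on the right, with $P_2$ handled symmetrically. Your closing remark that (Q1) is what makes this work, so the argument does not carry over verbatim to racks, is a nice explicit observation that the paper only hints at in its preceding remark.
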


\begin{proof}
Consider the case of the operator $P_1$.  If $x = (x_1, x_2),  y= (y_1, y_2) \in X$, then
\[
P_1(x) * P_1(y) = P_1 (x * P_1(y)).
\]
Indeed, if   $z_0 \in X_2$ is our fixed  element, then
\[
\begin{split}
P_1(x) * P_1(y) &=  (x_1, z_0) * (y_1, z_0) = (x_1 * y_1, z_0);\\
P_1 (x * P_1(y))&=  P_1((x_1, x_2) * (y_1, z_0)) = P_1((x_1 * y_1, x_2 * z_0)) = (x_1 * y_1,  z_0).
\end{split}
\]
The case of the operator $P_2$ is similar to the first one.
\end{proof}


 Recall a construction of disjoint union of racks (see, for example, \cite[Proposition 9.2]{BarTimSin}).


\begin{definition}
Let $(X_1, *)$, $(X_2, \circ)$ be racks and let ${\rm Conj}_{-1}\left({\rm Aut}(X_1) \right)$, ${\rm Conj}_{-1}\left({\rm Aut}(X_2) \right)$ be   $(-1)$-conjugation quandles of $X_1$ and $X_2$, respectively (see Example \ref{Def:5Fvvja}(i)). Let
$$
\sigma:   X_1 \to  {\rm Conj}_{-1} \left({\rm Aut}(X_2) \right), ~~\tau:   X_2 \to  {\rm Conj}_{-1} \left({\rm Aut}(X_1) \right)
$$
 be rack homomorphisms. The set $X:=X_1 \sqcup X_2$ with the operation:
\[
x\star y:=\begin{cases}
x*y,& x, y \in X_1; \\
x\circ y,  &x, y \in X_2; \\
{\tau(y)}(x),  &x \in X_1, y \in X_2; \\
{\sigma(y)}(x), &x \in X_2, y \in X_1,
\end{cases}
\]
is a rack if and only if the following conditions hold:
\begin{itemize}
\item[(i)] $\tau(z)(x)* y=\tau\left(\sigma(y)(z)\right)(x* y)$; \qquad ($x, y \in X_1$ and $z \in X_2$)
\item[(ii)] $\sigma(z)(x)\circ y=\sigma\left(\tau(y)(z)\right)(x\circ y)$, \qquad  ($x, y \in X_2$ and $z \in X_1$).
\end{itemize}
The   rack $X_1\underset{\sigma,\tau}{\sqcup} X_2:=(X_1\sqcup X_2,\star)$  is  called a {\it union of racks} $X_1$ and  $X_2$ (with respect to $\sigma,\tau$).
\end{definition}
For simplicity we  write $\tau_y(x)$ instead  $\tau(y)(x)$ and $\sigma_y(x)$ instead  $\sigma(y)(x)$, respectively.

A subrack $Y$ of a rack $(X,*)$ is called {\it  normal}  if $y * x\in Y$ for all  $y \in Y$ and  $x\in X$. Evidently,  if $Y$ is a normal subrack of $X$, then $Y$ is a union of orbits in $X$. Racks $X_1, X_2$ are obviously normal in $X=X_1\underset{\sigma,\tau}{\sqcup} X_2$, therefore if $X$ is  a connected rack, then it cannot be presented as a union of two subracks. If $X$ is not connected, $X_1$ is an orbit of some element from $X$, $X_2=X\setminus X_1$, then $X=X_1\underset{\sigma,\tau}{\sqcup} X_2$ for appropriate $\sigma,\tau$. So, a rack $X$ can be presented as a union of two racks if and only if $X$ is not connected. The dihedral quandle ${\rm R}_{2n}$ of an even order can be presented as a union ${\rm R}_n\underset{\sigma,\tau}{\sqcup} {\rm R}_n$  of two subquandles (see \cite{BarTimSin}) for appropriates  $\sigma, \tau$.


\begin{theorem}
Let  $(X_1, B_1)$ and  $(X_2, B_2)$ be racks with the averaging operators $B_1$ and $B_2$, respectively. Let $X=X_1\underset{\sigma,\tau}{\sqcup} X_2$ for some $\sigma, \tau$.   Let  $B:   X\to X$ be a map in which
\[
B(z):=\begin{cases}
B_1(z),  &\qquad\text{for}\quad  z \in X_1; \\
B_2(z),  &\qquad\text{for}\quad z \in X_2.
\end{cases}
\]
The map $B$ is an averaging operator if and only if for any $z_1 \in X_1$  and $z_2 \in X_2$ hold the following:
\[
\begin{split}
\tau_{B_2(z_2)}(B_1(z_1)) &= B_1(\tau_{B_2(z_2)}(z_1));\\
\sigma_{B_2(z_1)}(B_1(z_2)) &= B_1(\sigma_{B_2(z_1)}(z_2)).
\end{split}
\]
\end{theorem}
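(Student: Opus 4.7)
The plan is to verify the averaging identity $B(x) \star B(y) = B(x \star B(y))$ by a four-case analysis based on which of the subsets $X_1, X_2$ each of $x$ and $y$ belongs to. Both directions of the ``if and only if'' follow from the same computation run forwards or backwards.

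In the two diagonal cases where $x, y$ are both in $X_1$ (respectively both in $X_2$), the definitions force $B(x)$, $B(y)$, and $x \star B(y)$ to all lie in $X_1$ (respectively $X_2$), and the $\star$ operation restricts to $*$ (respectively $\circ$). Hence the averaging identity for $B$ collapses to $B_1(x) * B_1(y) = B_1(x * B_1(y))$ (respectively the analogous identity with $B_2$ and $\circ$), which holds by hypothesis. These cases thus contribute no new conditions.

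In the cross case $x = z_1 \in X_1$, $y = z_2 \in X_2$, the defining table for $\star$ makes the left side equal to $\tau_{B_2(z_2)}(B_1(z_1))$, while $z_1 \star B_2(z_2) = \tau_{B_2(z_2)}(z_1)$ lies in $X_1$, so the right side equals $B_1(\tau_{B_2(z_2)}(z_1))$. Equating the two is exactly the first compatibility condition in the statement. The remaining cross case $x \in X_2$, $y \in X_1$ is handled symmetrically with $\sigma$ replacing $\tau$ and yields the second compatibility condition. The converse is then automatic: given both identities, the same four computations read in reverse recover the averaging identity on all of $X$.

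The main obstacle is purely bookkeeping --- keeping straight in each case which of $*$, $\circ$, $\sigma_\bullet$, $\tau_\bullet$ governs the $\star$-product and which of $B_1, B_2$ the expression $B(\cdot)$ evaluates to. No ingredient beyond the hypotheses, the definition of $B$ on each piece, and the four-case defining table for $\star$ is needed; in particular one does not invoke any rack axiom beyond those already built into the construction of $X_1 \sqcup_{\sigma,\tau} X_2$.
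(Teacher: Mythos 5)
Your proposal is correct and follows essentially the same route as the paper: verify the averaging identity $B(x)\star B(y)=B(x\star B(y))$ by the four-case analysis on membership in $X_1$, $X_2$, with the diagonal cases reducing to the hypotheses on $B_1$, $B_2$ and the two mixed cases producing exactly the compatibility conditions, in both directions. Note only that your symmetric treatment of the case $x\in X_2$, $y\in X_1$ actually yields $\sigma_{B_1(y)}(B_2(x))=B_2\bigl(\sigma_{B_1(y)}(x)\bigr)$, which is the correctly typed form of the second condition; the indices in the theorem's second displayed identity (and in the paper's own proof of that case) are misprinted, so your computation is the right one.
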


\begin{proof}
Let us  check the identity
\begin{equation}\label{JNNyyym}
B(z_1) \star B(z_2) = B(z_1 \star B(z_2)), \qquad (z_1, z_2 \in X).
\end{equation}
If $z_1, z_2\in X_1$, or $z_1, z_2\in X_2$, then \eqref{JNNyyym} follows from the definition of $B$.
If $z_1 \in X_1, z_2 \in X_2$, then
\[
B_1(z_1) \star B_2(z_2) = B_1(z_1 \star B_2(z_2)),
\]
which is  equivalent to $\tau_{B_2(z_2)}(B_1(z_1)) = B_1(\tau_{B_2(z_2)}(z_1))$.

Let  $z_1 \in X_2$ and  $z_2 \in X_1$. It is easy to check that $B_2(z_1) \star B_1(z_2) = B_2(z_1 \star B_1(z_2))$, which  is equivalent to
$\sigma_{B_2(z_2)}(B_1(z_1)) = B_1(\sigma_{B_2(z_2)}(z_1))$.\end{proof}



\section{Rota--Baxter and averaging operators on rack algebras} \label{RackRing}

 Let $A$ be an algebra  over a field $\Bbbk$.  As we seen in Section \ref{Pr}, the  RB-operator of weight $\lambda \in \Bbbk$ on $A$ is  a linear map $R : A \to A$ such that
\begin{equation}\label{GbbxCC}
R(a) R(b) = R (R(a) b + a R(b) + \lambda ab),\qquad \qquad (a, b \in A).
\end{equation}
Further, we will call this operator   $R$ by {\it algebraic Rota--Baxter operator} (shortly ARB-operator) of weight $\lambda$ on $A$.

\begin{example} \label{ARO}
For any algebra $A$,  the zero operator: $R(a) = 0$ ($a \in A$); and the operator: $R(a) = - \lambda a$ ($a \in A$), are algebraic Rota--Baxter operators. These operators are called {\it trivial Rota--Baxter operators}.
\end{example}

 Let  $B$ be an RB-operator on a rack $X$ and $A = \Bbbk[X]$ be  its  rack algebra over a field $\Bbbk$.  We can extend $B$ by linearity to  $A$:
\begin{equation}\label{GEEsCCC}
B\left( \sum_{x \in X} \alpha_x \, x \right): = \sum_{x \in X} \alpha_x \, B(x), \qquad\qquad  (\alpha_x \in \Bbbk, x\in X).
\end{equation}
We will call such operators   by {\it rack Rota--Baxter operators} on $A$. If $X$ is a quandle,  we will call such operators  by {\it quandle Rota--Baxter operators}.

\begin{remark}
Further we denote a RB-operator on a rack and its extension on rack algebra by the same symbol. It is clear from the context where this operator acts.
\end{remark}

Also, on  rack algebra there exist   so called monomial operators, who occupy the middle position between algebraic and rack RB-operators.

\begin{definition}
An algebraic RB-operator   $R: \Bbbk[X] \to \Bbbk[X]$ which acts by the formula
\[
R\left( \sum_{x \in X} \alpha_x \, x \right) = \sum_{x \in X} \alpha_x \, R(x)=\sum_{x \in X} (\alpha_x \beta_x) y_x,   \qquad\qquad  (\alpha_x, \beta_x \in \Bbbk, y_x \in X)
\]
is said to be  a {\it monomial} RB-operator on $\Bbbk[X]$.
\end{definition}

Now we are considering the following question. Let $(X,*)$ be a rack and let  $B:   X \to X$ be  an RB-operator  on $X$. Under which conditions $B$ induces an algebraic RB-operator on  $\Bbbk[X]$?

An answer  gives.

\begin{proposition} \label{Condition}
Let $(X,*)$ be a rack and   $B:   X \to X$ be  an $RB$-operator  on $X$.
The  linear extension of   $B$ to $\Bbbk[X]$ is an algebraic RB-operator of weight $ \lambda$ (i.e. satisfy \eqref{GbbxCC})
if and only if $ \lambda = -1$ and for any $x, y \in X$ we have either
\[
\begin{cases}
B(x) *  B(y) &= B (B(x) * y); \\
B(x* y)&=B(x * B(y)),\\
\end{cases}
\qquad\text{or}\qquad
\begin{cases}
B(x) *  B(y) &= B (x * B(y));   \\
 B(x* y)&=B (B(x) * y).\\
\end{cases}
\]
\end{proposition}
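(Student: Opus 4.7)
The plan is to reduce the algebraic identity \eqref{GbbxCC} on $\Bbbk[X]$ to a single pointwise relation in the free module $\Bbbk[X]$, and then extract both the weight $\lambda=-1$ and the two systems of equations from a case analysis on which basis elements coincide.

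First, because $B$ is $\Bbbk$-linear and the rack-algebra multiplication is $\Bbbk$-bilinear, \eqref{GbbxCC} holds on all of $\Bbbk[X]$ if and only if it holds on pairs of basis vectors $a=x$, $b=y$ with $x,y\in X$. Expanding the right hand side by linearity of $B$, and using the hypothesis that $B$ is a rack RB-operator (so that $B(x)*B(y)=B((x*B(y))*y)$), I would rewrite the desired identity as
$$B\bigl((x*B(y))*y\bigr) - B(B(x)*y) - B(x*B(y)) - \lambda\, B(x*y) \;=\; 0 \qquad (\star)$$
in $\Bbbk[X]$. The key point is that, since $B:X\to X$ and $*$ is closed on $X$, each of the four terms in $(\star)$ is a single basis element of $\Bbbk[X]$; denote them $p,q,r,s$, carrying the coefficients $1,-1,-1,-\lambda$ respectively.

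Next I would run a case analysis on the partition of the multiset $\{p,q,r,s\}$ into equal subfamilies. Since $X$ is a $\Bbbk$-basis, $(\star)$ is a genuine linear dependence and every surviving coefficient after identification must vanish. If the four terms are pairwise distinct, or only one pair coincides, or three of them coincide, then some coefficient $\pm 1$ remains nonzero and we obtain a contradiction; the organizing observation is that the coefficient $1$ in front of $p$ cannot be cancelled unless $p$ agrees with at least one of $q,r,s$. The only surviving possibilities are the two double pairings $\{p=q,\ r=s\}$ and $\{p=r,\ q=s\}$. In each case the relation reduces to $-(1+\lambda)\cdot(\text{basis element})=0$, which forces $\lambda=-1$. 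Translating each pairing back through the definitions $p=B(x)*B(y)$, $q=B(B(x)*y)$, $r=B(x*B(y))$, $s=B(x*y)$ yields precisely the first and the second system in the statement. For the converse, if $\lambda=-1$ and for a given pair $(x,y)$ one of the two systems holds, then the right hand side of $(\star)$ cancels in pairs and the identity is satisfied; passing back through the reduction delivers \eqref{GbbxCC} on all of $\Bbbk[X]$.

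The main technical work is the bookkeeping in the case analysis, which is routine but has to be exhaustive. A minor subtlety worth flagging is the degenerate subcase $p=s$, $q=r$: it is formally compatible with $(\star)$ only when $\lambda=1$ and $\mathrm{char}\,\Bbbk=2$, in which case $-1=1$ and the scenario is already subsumed by the proposition's statement, so no separate treatment is required.
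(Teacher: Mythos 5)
Your argument is essentially the paper's: reduce \eqref{GbbxCC} by bilinearity to pairs of basis elements and then match basis vectors and coefficients in the free module $\Bbbk[X]$, which forces $\lambda=-1$ and exactly the two pairings $\{p=q,\,r=s\}$ or $\{p=r,\,q=s\}$; the paper runs the same matching argument, only with a terser case analysis, and your detour through the rack identity $B(x)*B(y)=B\bigl((x*B(y))*y\bigr)$ is harmless since you translate back to $p=B(x)*B(y)$ at the end. One caveat: your remark that the degenerate pairing $p=s$, $q=r$ (possible only when $\lambda=1$ and $\mathrm{char}\,\Bbbk=2$) is ``already subsumed by the proposition's statement'' is not accurate, since in that subcase $\lambda=1=-1$ but neither of the two displayed systems need hold; however, the paper's own proof makes the identical tacit assumption (its assertion that $B(B(x)*y)+B(x*B(y))\neq 0$ fails exactly when those two basis elements coincide in characteristic $2$), so your treatment is on par with the published argument.
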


\begin{proof}
Let us show that the equality \eqref{GbbxCC} holds if  it holds for elements of $X$. Indeed, if
$u = \sum_{x \in X} \alpha_x \, x$ and  $v = \sum_{y \in X} \beta_y \, y\in \Bbbk[X]$ in which $\alpha_x, \beta_y \in \Bbbk$, then   using \eqref{GEEsCCC} we have
$$
B(u) *  B(v) = \sum_{x, y \in X} \alpha_x \beta_y \, \left( B(x) * B(y)\right),
$$
$$
B (B(u) * v) + B(u * B(v)) + \lambda B(u * v) = B\left( \sum_{x, y \in X} \alpha_x \beta_y \,  (B(x) * y) \right) +
B\left( \sum_{x, y \in X} \alpha_x \beta_y \, (x * B(y)) \right) +
$$
$$
+\lambda \left( \sum_{x, y \in X} \alpha_x \beta_y \, \left( B(x) * B(y)\right) \right).
$$
Hence, if for any $x, y \in X$ holds \eqref{GbbxCC}, then it holds for $u,v$.

Let us find conditions under which  the equality \eqref{GbbxCC} holds. Since all elements $B(x)$, $B(y)$ lies in $X$, the left hand side $B(x) *  B(y)$ of  \eqref{GbbxCC} must be equal to
$$
B (B(x) * y)~\mbox{or}~B(x * B(y))~\mbox{or}~\lambda B(x* y).
$$
But the last case is not possible since $B (B(x) * y) + B(x * B(y)) \not= 0$. Hence,
 \[
\begin{cases}
B(x) *  B(y) &= B (B(x) * y); \\
-\lambda B(x* y)&=B(x * B(y)),\\
\end{cases}
\qquad\text{or}\qquad
\begin{cases}
B(x) *  B(y) &= B (x * B(y));   \\
-\lambda B(x* y)&=B (B(x) * y),\\
\end{cases}
\]
which  is possible only  if $\lambda = -1$.
\end{proof}

\begin{proposition}
Let $\Bbbk[X]$ be the quandle algebra of a quandle $X$ over a field $\Bbbk$.
Let $B : \Bbbk[X] \to \Bbbk[X]$ be a quandle and  algebraic RB-operator on the quandle algebra $\Bbbk[X]$.  Then for any $ x \in X$ holds
$$
B (B(x) * x) = B (x * B(x)).
$$
\end{proposition}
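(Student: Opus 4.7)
The plan is to apply Proposition \ref{Condition} with $y = x$ in both of the two dichotomous cases it produces. Since $B$ is simultaneously a quandle RB-operator and an algebraic RB-operator on $\Bbbk[X]$, the cited proposition forces the weight to be $\lambda = -1$ and, for every pair $x,y \in X$, one of the following two systems:
\[
\begin{cases} B(x) * B(y) = B(B(x) * y), \\ B(x*y) = B(x * B(y)), \end{cases}
\qquad\text{or}\qquad
\begin{cases} B(x) * B(y) = B(x * B(y)), \\ B(x*y) = B(B(x) * y). \end{cases}
\]

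The crux is to specialize these to $y = x$ and exploit the quandle axiom (Q1). Since $X$ is a quandle, $x*x = x$ for every $x \in X$, and, because $B(x) \in X$, also $B(x)*B(x) = B(x)$. In the first case, substituting $y = x$ into the top identity gives $B(x) = B(x)*B(x) = B(B(x) * x)$, while the bottom identity yields $B(x) = B(x*x) = B(x * B(x))$. Thus both quantities $B(B(x)*x)$ and $B(x * B(x))$ collapse to $B(x)$, so they coincide. In the second case the roles of the two identities are swapped but the conclusion is exactly the same: one equation gives $B(x*B(x)) = B(x)$ and the other gives $B(B(x)*x) = B(x)$.

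There is no serious obstacle here; the only subtlety is to remember that Proposition \ref{Condition} is an ``either/or'' statement per pair, but since the argument for each single pair $(x,x)$ closes inside whichever branch is chosen, we do not need any global dichotomy. The final sentence of the write-up can even record the stronger equality $B(B(x)*x) = B(x*B(x)) = B(x)$.
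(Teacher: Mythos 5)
Your proof is correct and follows essentially the same route as the paper: specialize the two systems from Proposition \ref{Condition} to $y = x$, use the quandle idempotency $x*x = x$ and $B(x)*B(x) = B(x)$, and observe that in either branch both sides collapse to $B(x)$. The only (harmless) difference is that you make the per-pair nature of the dichotomy and the stronger conclusion $B(B(x)*x) = B(x*B(x)) = B(x)$ explicit, which the paper leaves implicit.
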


\begin{proof}
Since $B$ is a quandle and algebraic RB-operator, one of the systems  from Proposition \ref{Condition} holds. Suppose that the first system holds. Then for $x = y$ we have
 \[
\begin{cases}
B(x) *  B(x) &= B (B(x) * x), \\
 B(x* x)&=B(x * B(x)),\\
\end{cases} \Leftrightarrow
\begin{cases}
B(x)  &= B (B(x) * x), \\
 B(x)&=B(x * B(x)).\\
\end{cases}
\]
Hence, $B (B(x) * x) = B (x * B(x))$. If the second system holds, we get the same equality.
\end{proof}

Taking  in attention Example \ref{ARO} we get than

\begin{corollary}
For any rack algebra $\Bbbk[X]$ the identity map is a rack and algebraic Rota--Baxter operator of weight $\lambda = -1$.
\end{corollary}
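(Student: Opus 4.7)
The plan is to read the corollary as the conjunction of two claims — that the identity on $\Bbbk[X]$ is an algebraic Rota--Baxter operator of weight $-1$, and that it is simultaneously a rack Rota--Baxter operator in the sense of the alternatives produced in Proposition \ref{Condition} — and to verify each piece separately.

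For the algebraic part, I would simply invoke Example \ref{ARO}. That example exhibits $R(a) = -\lambda a$ as an algebraic Rota--Baxter operator of weight $\lambda$ on every $\Bbbk$-algebra $A$. Specializing to $A = \Bbbk[X]$ and $\lambda = -1$ yields $R(a) = a$, which is precisely $\id_{\Bbbk[X]}$. So the identity is an algebraic RB-operator of weight $-1$ for free, without any computation.

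For the rack side, I would present $\id_{\Bbbk[X]}$ as the linear extension of $\id_X \colon X \to X$ via formula \eqref{GEEsCCC}, and then substitute $B = \id_X$ into the two alternative systems appearing in the conclusion of Proposition \ref{Condition}. Each of the four component identities $B(x)*B(y) = B(B(x)*y)$, $B(x*y) = B(x*B(y))$, $B(x)*B(y) = B(x*B(y))$, and $B(x*y) = B(B(x)*y)$ collapses to the tautology $x*y = x*y$, so both alternative systems hold trivially, and the weight forced by the proposition is $\lambda = -1$.

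The only conceptual point worth flagging — not really an obstacle — is that Definition \ref{Def:11} need not hold for $\id_X$ on a generic rack, since the identity $x*y = (x*y)*y$ can fail (for instance in dihedral quandles). Consequently, the argument must be routed through Proposition \ref{Condition}, whose proof in fact derives the four-identity system directly from the algebraic Rota--Baxter equation applied to basis elements, without presupposing any rack-level RB-identity on $B$. Once one reads the corollary through this lens, the simultaneous rack-and-algebraic RB status of $\id_{\Bbbk[X]}$ is immediate, with weight $\lambda = -1$ in both instances.
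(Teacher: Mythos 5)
Your proposal is correct, and its algebraic half coincides with what the paper actually does: the paper offers no argument beyond the phrase ``taking in attention Example~\ref{ARO}'', i.e.\ the observation that the trivial operator $R(a)=-\lambda a$ with $\lambda=-1$ is exactly $\id_{\Bbbk[X]}$, so the identity is an algebraic RB-operator of weight $-1$. Where you genuinely add something is in the ``rack'' half, which the paper leaves entirely implicit. Your flag is well taken: with $B=\id_X$ the identity of Definition~\ref{Def:11} reads $x*y=(x*y)*y$, and since $S_y$ is a bijection this forces $S_y=\id$ for every $y$, so $\id_X$ is an RB-operator on $X$ in the sense of Definition~\ref{Def:11} only when the rack is trivial (your dihedral example already witnesses the failure). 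Hence, under the paper's literal definition of a ``rack Rota--Baxter operator'' as the linear extension via \eqref{GEEsCCC} of a Definition~\ref{Def:11} operator, the corollary's rack claim is not justified as stated; your rereading through the two alternative systems of Proposition~\ref{Condition} --- all four identities collapsing to $x*y=x*y$ when $B=\id$, with the weight forced to be $-1$ --- is the sensible repair, and you are right that this requires noting that the proof of Proposition~\ref{Condition} only uses linear independence of the basis elements of $\Bbbk[X]$ and never invokes the rack-level RB-identity assumed in its hypothesis. In short: same route as the paper for the algebraic statement, a more careful (and mildly corrective) route for the rack statement, which exposes an imprecision in the paper's formulation rather than a gap in your argument.
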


Now we are considering averaging operators.

\begin{proposition}
Any averaging operator on a rack is extended by linearity to an averaging operator on the  rack algebra.
\end{proposition}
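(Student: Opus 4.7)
The plan is to prove the algebraic averaging identity $B(u) * B(v) = B(u * B(v))$ for all $u,v \in \Bbbk[X]$ by expanding both sides in the basis $X$ and reducing to the pointwise identity $B(x)*B(y) = B(x*B(y))$, which holds by hypothesis. Two structural facts will be used throughout: the multiplication on $\Bbbk[X]$ is, by convention, the $\Bbbk$-bilinear extension of $*$ from $X$; and the extension of $B$ defined in~\eqref{GEEsCCC} is $\Bbbk$-linear.

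Concretely, I would take arbitrary finite sums $u = \sum_{x \in X} \alpha_x x$ and $v = \sum_{y \in X} \beta_y y$ in $\Bbbk[X]$. For the left-hand side, the linearity of $B$ together with bilinearity of $*$ gives
\[
B(u) * B(v) = \Bigl(\sum_{x} \alpha_x B(x)\Bigr) * \Bigl(\sum_{y} \beta_y B(y)\Bigr) = \sum_{x,y} \alpha_x \beta_y\, B(x) * B(y).
\]
For the right-hand side, I would first compute $u * B(v) = \sum_{x,y} \alpha_x \beta_y\, x * B(y)$ and then apply $B$ linearly to obtain
\[
B\bigl(u * B(v)\bigr) = \sum_{x,y} \alpha_x \beta_y\, B\bigl(x * B(y)\bigr).
\]
The pointwise rack averaging identity from Definition~\ref{Def:9KKjfm} makes the summands coincide term by term, which completes the verification.

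There is essentially no obstacle in this argument; the only point worth stressing is that the scalars $\alpha_x, \beta_y$ pass through both $*$ and $B$ undisturbed precisely because multiplication in $\Bbbk[X]$ is bilinear and the extension of $B$ is linear by construction. By contrast, the analogous question for Rota--Baxter operators (handled in Proposition~\ref{Condition}) is strictly more delicate, since the Rota--Baxter identity contains a sum of three terms on one side, and term-matching in the basis forces the auxiliary constraints on $B$ together with the weight $\lambda = -1$. The averaging identity avoids this pitfall because each side reduces to a single monomial expression in $B(x), B(y), x, y$, so no linear independence argument in $\Bbbk[X]$ beyond bilinearity is needed.
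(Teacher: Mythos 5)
Your argument is correct and is essentially identical to the paper's proof: both expand $u$ and $v$ in the basis $X$, use bilinearity of the product and linearity of the extension of $B$, and reduce the algebraic identity $B(u)*B(v)=B(u*B(v))$ to the pointwise rack identity term by term. Your closing remark contrasting this with the Rota--Baxter case matches the paper's treatment in Proposition~\ref{Condition} as well.
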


\begin{proof}
Consider only  right averaging operators, proof for left averaging operators is similar.

Let $B : X \to X$ be  an averaging operator on the rack $(X, *)$, i.e.
$$
B(x) *  B(y) = B (x * B(y)),\qquad\qquad (x, y \in X).
$$
For any $u = \sum_{x \in X} \alpha_x \, x$ and $v = \sum_{y \in X} \beta_y \, y\in \Bbbk[X]$ where $\alpha_x, \beta_y \in \Bbbk$, we have that
\[
\begin{split}
B(u) *  B(v) &= \sum_{x, y \in X} \alpha_x \beta_y \, \left( B(x) * B(y)\right);\\
B (u * B(v)) &= B\left( \sum_{x, y \in X} \alpha_x \beta_y \, \left( x * B(y)\right) \right) = \sum_{x, y \in X} \alpha_x \beta_y \, B\left( x * B(y)\right).
\end{split}
\]
Since $B$ is an averaging operator on $X$,  we conclude that $B(u) *  B(v) = B (u * B(v))$,
so $B$ induced an averaging operator on $\Bbbk[X]$.
\end{proof}

\begin{question}
Let  $\Bbbk[\R_n]$ be a   quandle algebra of a dihedral quandle $\R_n$ over a field $\Bbbk$. Find:

-- Averaging operators on $\R_n$ and on $\Bbbk[\R_n]$;

-- Rota--Baxter operators on $\R_n$;

-- (monomial) Rota--Baxter operators on $\Bbbk[\R_n]$.
\end{question}



\section{Some generalization of conjugation quandle and core quandle} \label{GenRack}

In Example \ref{Def:5Fvvja}, for each group $G$ we constructed  the conjugation quandle $\Conj(G)$ and the core quandle $\Core(G)$.  The following definition generalizes these constructions.

\begin{definition}\label{Def:6RRRcb}
Let $(G, B)$ be a Rota--Baxter group. Define the following  two groupoids:
\begin{itemize}
\item[(i)] {\it $B$-conjugation groupoid:} $(\Conj_B(G), *_B)$ in which  $x *_B y:= B(y) x B(y)^{-1}$, for  $x, y \in G$;

\item[(ii)] {\it $B$-core  groupoid:}  $(\Core_B(G), *_B)$ in which $x *_B y:= B(y) x^{-1} B(y)$ for  $x, y \in G$.
\end{itemize}
\end{definition}

Evidently,   $(a *_B b) *_B b = a$ for all $a, b \in G$ in $\Core_B(G)$.

\begin{example}
Let $(G, B)$ be an RB-group. Let  $(\Conj_B(G), *_B)$ and $(\Core_B(G), *_B)$ be groupoids defined in Definition \ref{Def:6RRRcb}. Let $B_{[-1]}$ and $B_{[0]}$ be maps from Example \ref{Exccgaf}.
\begin{itemize}
\item[(i)] If $B:=B_{[-1]}$, then the operation $*_B$ in the groupoid $\Conj_B(G)$ has the form $x *_B y:= y^{-1} x y$ and $\Conj_B(G) = \Conj(G)$ is the conjugation quandle.

\item[(ii)] If $B:=B_{[-1]}$, then the operation $*_B$ in $\Core_B(G)$ has the form $x *_B y:= y^{-1} x^{-1} y^{-1}$. Clearly,  in general,  $(\Core_B(G), *_B)$   is not a rack.

\item[(iii)] If $B:=B_{[0]}$  then the operation in $\Conj_B(G)$ has the form $x *_B y:=  x$ that means that $\Conj_B(G)$ is a trivial quandle.

\item[(iv)]  If $B:=B_{[0]}$  then the operation in $\Core_B(G)$ has the form $x *_B y:=  x^{-1}$ and  $(\Core_B(G), *_B)$ is a rack.   If $G$ is an elementary abelian $2$-group, then $\Core_B(G)$ is a quandle.
\end{itemize}
\end{example}

Let $\mathcal{RB}(G)$ be a  set of all $RB$-operators on a group $G$. If  $\mathcal{G}$ is the set of all groupoids  on the set $G$, then there exist the following  two maps:
\[
\Psi_1, \Psi_2: G \times  \mathcal{RB}(G) \to \mathcal{G},
\]
which are defined (see Definition \ref{Def:6RRRcb}) by the  rules:
\begin{equation}\label{ZZAQAZ}
\begin{split}
\Psi_1 ((G, B)) &\mapsto (G, x *_B y := B(y) x B(y)^{-1}),\\
\Psi_2 ((G, B)) &\mapsto (G, x *_B y := B(y) x^{-1} B(y)),\qquad\quad (x, y \in G).
\end{split}
\end{equation}

In Proposition \ref{Prop:2} for each RB-group $(G, B)$ was defined a new operation  $\circ$ on $G$, such that $(G, B,\circ)$ becomes a group.
Conditions on an RB-group  $(G,B)$   under which  groupoids  $(G, B, *_B)$  from  Definition \ref{Def:6RRRcb} become  racks or/and quandles, provide the following.

\begin{theorem} \label{Constr}
Let $(G, B)$ be a  Rota--Baxter group. Let  $\zeta(G)$ be the center of $G$. Let $(\Conj_B(G), *_B)$ and  $(\Core_B(G), *_B)$ be groupoids from  Definition \ref{Def:6RRRcb}.  The following conditions hold:
\begin{itemize}
\item[(i)] if $B(b)^{-1} b\in \zeta(G)$ for any $b\in G$, then  the  $B$-conjugation  groupoid $(\Conj_B(G),*_B)$ is a rack.
Moreover, if  $B(a) \in C_G(a)$ for any $a \in G$, then the  $B$-conjugation groupoid $(\Conj_B(G),*_B)$ is a quandle;

\item[(ii)] if
\[
B(c) B(b)^{-1} B(c) = B \left( B(c)^{-1} B(b)^{-1} B(c)^{-1} \right),   \qquad\qquad  (b, c \in G)
\]
then the $B$-core groupoid $(\Core_B(G),*_B)$ is a rack.
Moreover, if $\left( a B(a) \right)^2 = 1$ for any $a \in G$, then $(\Core_B(G),*_B)$ is a quandle.
\end{itemize}
\end{theorem}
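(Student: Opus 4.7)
The plan is to check, for each of the two groupoid constructions, the rack axioms (Q2) and (Q3) of Definition \ref{Def:4jja}, and, under the extra hypothesis, the quandle axiom (Q1), by direct expansion and by invoking the weight-$1$ Rota--Baxter identity from Definition \ref{Def:1}.

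For part (i), note first that $x *_B y = B(y) x B(y)^{-1}$ is just conjugation of $x$ by $B(y)$, so the right translation $S_y \colon x \mapsto x *_B y$ is a group automorphism of $G$, which gives axiom (Q2) at once. For (Q3) I expand
\[
(x *_B y) *_B z = B(z) B(y) x B(y)^{-1} B(z)^{-1},
\]
\[
(x *_B z) *_B (y *_B z) = B\bigl(B(z) y B(z)^{-1}\bigr) B(z) x B(z)^{-1} B\bigl(B(z) y B(z)^{-1}\bigr)^{-1},
\]
so (Q3) reduces to showing that $B(z) B(y)$ and $B\bigl(B(z) y B(z)^{-1}\bigr) B(z)$ differ by an element of $\zeta(G)$. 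I expect this to follow by combining the weight-$1$ RB identity $B(z) B(y) = B\bigl(z B(z) y B(z)^{-1}\bigr)$ with the hypothesis $B(b)^{-1} b \in \zeta(G)$, which says that $B$ descends to the identity on $G/\zeta(G)$; in that quotient both elements represent $zy$, so they coincide modulo the center. For the quandle upgrade, (Q1) reads $B(x) x B(x)^{-1} = x$, which is exactly the added hypothesis $B(x) \in C_G(x)$.

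For part (ii), axiom (Q2) is immediate: from $B(y) z^{-1} B(y) = x$ one recovers the unique $z = B(y) x^{-1} B(y)$, consistent with the involutivity $(a *_B b) *_B b = a$ noted in the text. For (Q3) I expand
\[
(x *_B y) *_B z = B(z) B(y)^{-1} x B(y)^{-1} B(z),
\]
\[
(x *_B z) *_B (y *_B z) = B\bigl(B(z) y^{-1} B(z)\bigr) B(z)^{-1} x B(z)^{-1} B\bigl(B(z) y^{-1} B(z)\bigr),
\]
and rewrite the left-hand side by inserting $B(z)^{-1} B(z)$, so that the outer blocks become $B(z) B(y)^{-1} B(z)$. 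The stated hypothesis then replaces each outer block by $B\bigl(B(z)^{-1} B(y)^{-1} B(z)^{-1}\bigr)$, and the remaining task is to identify this with the outer block $B\bigl(B(z) y^{-1} B(z)\bigr)$ on the right-hand side, which should follow by a further application of the hypothesis or of the RB identity applied to the argument of $B$. For the quandle upgrade, $x *_B x = B(x) x^{-1} B(x) = x$ follows from the hypothesis $(a B(a))^2 = 1$ after a short rearrangement.

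The main obstacle, as indicated, lies in part (ii): the given hypothesis is a single identity on values of $B$, whereas axiom (Q3) produces an equation of the form $A x B' = C x D$ that, valid for all $x$, amounts to two independent constraints on the outer $B$-factors. Matching them will require careful bookkeeping and likely a second invocation of the hypothesis or of the RB identity of Definition~\ref{Def:1}. By contrast, part (i) is relatively clean: the entire verification takes place modulo $\zeta(G)$, and the RB identity together with the collapse $B \equiv \id \pmod{\zeta(G)}$ does the heavy lifting.
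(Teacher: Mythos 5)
Your part (i) is correct and is in substance the same argument as the paper's: (Q2) is just bijectivity of conjugation by $B(y)$, and (Q3) reduces to the two conjugators $B(z)B(y)$ and $B\bigl(B(z)yB(z)^{-1}\bigr)B(z)$ agreeing modulo $\zeta(G)$, which indeed follows from $B(b)^{-1}b\in\zeta(G)$ alone -- note that the Rota--Baxter identity of Definition~\ref{Def:1}, which you invoke, is not actually needed for this; the quandle upgrade is literally $B(x)\in C_G(x)$.

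Part (ii) has a genuine gap, which you flag but do not close. Your expansions and the rewriting of the left-hand side with outer blocks $B(z)B(y)^{-1}B(z)$ are correct, and the stated hypothesis turns these blocks into $B\bigl(B(z)^{-1}B(y)^{-1}B(z)^{-1}\bigr)$. But the proof then requires this element to coincide with the outer block $B\bigl(B(z)y^{-1}B(z)\bigr)$ of the right-hand side (or at least to differ from it by a central element of order at most $2$, inserted symmetrically). You only say this ``should follow by a further application of the hypothesis or of the RB identity,'' and no such derivation is given; it is not clear that one exists, because the hypothesis constrains $B$ only on words in the values $B(b),B(c)$, whereas the needed argument $B(z)y^{-1}B(z)$ contains the bare element $y$, and neither weight-$\pm1$ identity of Definition~\ref{Def:1} converts one into the other. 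As your own reduction shows, what would finish (Q3) is the identity $B(z)B(y)^{-1}B(z)=B\bigl(B(z)y^{-1}B(z)\bigr)$ (possibly up to a central involution), and this is exactly the step left unproven; the paper's proof of (ii) is only the remark that it is ``similar'' to (i), so the burden of this identification is precisely where a complete argument must do real work.

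The quandle upgrade in (ii) also does not follow ``after a short rearrangement'': the axiom $x*_Bx=x$ reads $B(x)x^{-1}B(x)=x$, which rearranges to $\bigl(x^{-1}B(x)\bigr)^2=1$, not to the stated hypothesis $\bigl(xB(x)\bigr)^2=1$. For instance, for the elementary operator $B_{[-1]}$ of Example~\ref{Exccgaf} on the cyclic group $C_8$ one has $\bigl(xB(x)\bigr)^2=1$ for every $x$, yet $B(x)x^{-1}B(x)=x^{-3}\neq x$ for a generator; so the implication you claim, using only $\bigl(xB(x)\bigr)^2=1$, is false as stated (this is not a counterexample to the theorem, since there the rack hypothesis of (ii) fails, but it shows your one-line derivation cannot be right and that any correct argument must also use the rack hypothesis or a corrected identity).
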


\begin{proof} (i) For each $a,b \in G$ there is a unique $x = B(a)^{-1} b B(a) \in G$ such that $x *_B  a = b$. Since
\[
\begin{split}
(a*_Bb)*_Bc &= B(c) B(b) a B(b)^{-1} B(c)^{-1};\\
(a*_Bc)*_B(b*_Bc)& = B(c) b a b^{-1} B(c)^{-1},
\end{split}
\]
we obtain that $B(b) a B(b)^{-1} = b a b^{-1}$ by  (Q3). This  yields that
  $B(b)^{-1} b\in \zeta(G)$. According to (Q2), we have that  $B(a) a B(a)^{-1} = a$, so    $B(a)\in C_G(a)$.

(ii) The proof is similar. \end{proof}

Let $I$ be a set. Let $(G, B_i)$, $i \in I$ be RB-groups, such that  the set of their images
\begin{equation}\label{ZNNFTbZ}
\{ \Psi_k (G, B_i), \mid   i\in I,  k= 1,2; \quad \Psi_1,\Psi_2 \quad \text{from \eqref{ZZAQAZ}}\}
\end{equation}
consists of  quandles which were  constructed in   Theorem \ref{Constr}.
Let   $(G, *_i, i \in I )$ be the algebraic system with the  set of quandle operations $*_i: = *_{B_i}$. This algebraic system is called an {\it  $I$-quandle system}.

Define a product of quandle operations $*_i$ (see \cite{BF}) by the rule:
\begin{equation}\label{yyseVVq}
g *_s *_t h :=  (g *_s h) *_t h,  \qquad\qquad  (g, h \in G).
\end{equation}

According to \cite{BF}, a pair  $(G, *_s *_t)$  in which $s,t\in I$ is a quandle if and only if
\begin{equation}\label{BBBvB}
(g *_s h) *_t q = (g *_t q) *_s (h *_t q),  \qquad\qquad  (g, h \in G, s, t \in I).
\end{equation}
An $I$-quandle system $(G, *_i, i \in I)$  in  which  \eqref{BBBvB} holds for any $s,t\in I$
is  called a {\it multi-quandle} (see Turaev \cite{T}).
Turaev  used multi-quandles for construction of some knot invariants. A similar conception of  multi-groups were introduced and studied  in \cite{BNY-1, Ko}.

\begin{proposition}
 Let $I$ be a set. Let $(G, B_i)$, $i\in I$ be RB-groups such that   $(G, *_i, i \in I)$ is an $I$-quandle system, in which
\begin{equation}\label{HHffdpl}
x *_i y: (= x *_{B_i} y )= B_i(y) x B_i(y)^{-1},\qquad  (x, y \in G, i \in I).
\end{equation}
This quandle system   is a multi-quandle if and only if
\[
B_t(q)^{-1} B_s\left( B_t(q) h B_t(q)^{-1} \right)^{-1} B_t(q)B_s(h)
\]
lies in the center $\zeta(G)$.
\end{proposition}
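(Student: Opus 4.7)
The plan is to verify the multi-quandle axiom \eqref{BBBvB} for the operations \eqref{HHffdpl} by direct substitution, and then to rewrite the resulting group equation as a centrality condition. First I would expand both sides of $(g *_s h) *_t q = (g *_t q) *_s (h *_t q)$ using $x *_i y = B_i(y) x B_i(y)^{-1}$. The left side collapses immediately to $B_t(q)\, B_s(h)\, g\, B_s(h)^{-1}\, B_t(q)^{-1}$. For the right side, one must first evaluate $h *_t q = B_t(q) h B_t(q)^{-1}$ and then apply $B_s$ to this element, obtaining $B_s(B_t(q) h B_t(q)^{-1})\, B_t(q)\, g\, B_t(q)^{-1}\, B_s(B_t(q) h B_t(q)^{-1})^{-1}$.

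Next, to keep the bookkeeping manageable, I would introduce the abbreviations $C := B_t(q)$, $D := B_s(h)$ and $A := B_s(C h C^{-1})$, so that the identity to be verified reads $C D g D^{-1} C^{-1} = A C g C^{-1} A^{-1}$ for every $g \in G$. The key algebraic step is to rearrange this, by conjugating appropriately, into the equivalent form $u\, g\, u^{-1} = g$ where $u := C^{-1} A^{-1} C D$. Since this equation must hold for every $g \in G$, it is equivalent to the single condition $u \in \zeta(G)$.

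Unpacking the abbreviations yields
\[
u \;=\; B_t(q)^{-1}\, B_s\!\left(B_t(q)\, h\, B_t(q)^{-1}\right)^{-1}\, B_t(q)\, B_s(h),
\]
which is precisely the element appearing in the statement. Quantifying over $h, q \in G$ and $s, t \in I$ then gives the claimed equivalence in both directions: the multi-quandle identity for all $g,h,q,s,t$ is equivalent to the centrality of $u$ for all $h,q,s,t$.

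The computation is essentially routine; there is no deep content beyond careful bookkeeping. The one point where I would be careful is in the noncommutative rearrangement of $CDgD^{-1}C^{-1} = ACgC^{-1}A^{-1}$ into $ugu^{-1}=g$, so as not to inadvertently assume any commutativity. Once this rewriting is done, the equivalence with $u \in \zeta(G)$ is immediate from the standard fact that an element commutes with every $g \in G$ if and only if it lies in the center.
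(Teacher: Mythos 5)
Your proposal is correct and follows essentially the same route as the paper's own proof: expand both sides of the multi-quandle identity $(g *_s h) *_t q = (g *_t q) *_s (h *_t q)$ using $x *_i y = B_i(y) x B_i(y)^{-1}$, observe that both sides are conjugates of $g$, and conclude that equality for all $g$ is equivalent to the element $B_t(q)^{-1} B_s\left(B_t(q) h B_t(q)^{-1}\right)^{-1} B_t(q) B_s(h)$ being central. The paper merely writes the two conjugates in exponential notation before extracting the same centrality condition, so there is no substantive difference.
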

\begin{proof}
Let $*_i$ be a   quandle operation on $G$ for each $i$ defined by \eqref{HHffdpl}.
Clearly,
\[
\begin{split}
(g *_s h) *_t q &= g^{B_s(h)^{-1} B_t(q)^{-1}};\\
(g *_t q) *_s (h *_t q)&=g^{B_t(q)^{-1} B_s\left(B_t(q) h B_t(q)^{-1}\right)^{-1}}.
\end{split}
\]
According to \eqref{BBBvB}, we obtain that $
g^{B_s(h)^{-1} B_t(q)^{-1}} = g^{B_t(q)^{-1} B_s\left(B_t(q) h B_t(q)^{-1}\right)^{-1}} $ and
\[
 B_t(q)^{-1} B_s\left( B_t(q) h B_t(q)^{-1} \right)^{-1} B_t(q)B_s(h)\in\zeta(G).
\]
\end{proof}

\begin{proposition}
Let $I$ be a set. Let $(G, B_i)$, $i\in I$ be RB-groups such that   $(G, *_i, i \in I)$ is an $I$-quandle system, in which
\begin{equation}\label{HVVfdl}
x *_i y:( = x *_{B_i} y  )= B_i(y) x^{-1} B_i(y),\qquad  (x, y \in G, i \in I).
\end{equation}
This quandle system  is a multi-quandle if and only if for any $h, q \in G$ and $s, t \in I$ holds
\[
\begin{split}
B_t(q) B_s(h)^{-1} g B_s(h)^{-1} B_t(q) =& B_s \left( B_t(q) h^{-1} B_t(q) \right) \times\\
 &\times B_t(q)^{-1} g B_t(q)^{-1} B_s \left( B_t(q) h^{-1} B_t(q) \right).
\end{split}
\]
\end{proposition}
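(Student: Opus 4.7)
The plan is to apply the multi-quandle criterion \eqref{BBBvB} directly to the core-type operation $*_i$ of \eqref{HVVfdl} and simply unfold both sides as group words. Since each individual $(G,*_i)$ is already a quandle by the hypothesis that $(G,*_i,i\in I)$ is an $I$-quandle system, all that remains to verify is the mixed compatibility \eqref{BBBvB} for arbitrary $s,t\in I$ and $g,h,q\in G$. Thus the proof reduces to a single bookkeeping pass in the group.

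First I would expand the left-hand side of \eqref{BBBvB}. By \eqref{HVVfdl}, $g*_s h=B_s(h)g^{-1}B_s(h)$, so $(g*_s h)^{-1}=B_s(h)^{-1}gB_s(h)^{-1}$, and therefore
\[
(g*_s h)*_t q=B_t(q)(g*_s h)^{-1}B_t(q)=B_t(q)B_s(h)^{-1}gB_s(h)^{-1}B_t(q),
\]
which matches the left-hand side of the displayed identity. Next, from $g*_t q=B_t(q)g^{-1}B_t(q)$ and $h*_t q=B_t(q)h^{-1}B_t(q)$ one obtains $(g*_t q)^{-1}=B_t(q)^{-1}gB_t(q)^{-1}$, hence
\[
(g*_t q)*_s(h*_t q)=B_s\bigl(B_t(q)h^{-1}B_t(q)\bigr)B_t(q)^{-1}gB_t(q)^{-1}B_s\bigl(B_t(q)h^{-1}B_t(q)\bigr),
\]
which matches the right-hand side. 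Equating these two expressions gives the stated identity, and conversely, the stated identity is exactly \eqref{BBBvB} for the operations \eqref{HVVfdl}.

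The computation itself is routine; there is no real conceptual obstacle. The only subtle point is cosmetic and explains why the proposition is stated as a raw group-word identity rather than as a membership condition in $\zeta(G)$: in the conjugation-type case of the preceding proposition, both sides of \eqref{BBBvB} had the form $g^{u}$ (an honest conjugation by a word $u$), so $g$ could be cancelled and the condition collapsed to $u_{\text{LHS}}\,u_{\text{RHS}}^{-1}\in\zeta(G)$. Here, because the core formula $B_i(y)x^{-1}B_i(y)$ flanks $x^{-1}$ by two copies of $B_i(y)$ rather than by $B_i(y)$ and $B_i(y)^{-1}$, the words $B_t(q)B_s(h)^{-1}$ and $B_s(h)^{-1}B_t(q)$ appearing on either side of $g$ in the left-hand side are not mutually inverse, and similarly on the right; consequently $g$ cannot be stripped out. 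So the expected challenge is purely notational: carefully tracking the inverses as they pass through the core formula.
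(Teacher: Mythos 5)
Your proposal is correct and follows essentially the same route as the paper: expand $(g *_s h) *_t q$ and $(g *_t q) *_s (h *_t q)$ directly from the core-type formula $x *_i y = B_i(y)x^{-1}B_i(y)$ and observe that the multi-quandle condition \eqref{BBBvB} is precisely the displayed group-word identity. The additional remark about why $g$ cannot be cancelled (unlike in the conjugation-type case) is a sensible aside and does not affect the argument.
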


\begin{proof}
Let $*_i$ be a   quandle operation on $G$ for each $i$ defined by \eqref{HVVfdl}.
Clearly,
\[
\begin{split}
(g *_s h) *_t q &= B_t(q) B_s(h)^{-1} g B_s(h)^{-1} B_t(q);\\
 (g *_t q) *_s (h *_t q) &= B_s \left( B_t(q) h^{-1} B_t(q) \right)  B_t(q)^{-1} g B_t(q)^{-1} B_s \left( B_t(q) h^{-1} B_t(q) \right).
\end{split}
\]
According to \eqref{BBBvB}, we obtain that
\[
\begin{split}
B_t(q) B_s(h)^{-1} g B_s(h)^{-1} B_t(q) &= B_s \left( B_t(q) h^{-1} B_t(q) \right) \times \\
&\times B_t(q)^{-1} g B_t(q)^{-1} B_s \left( B_t(q) h^{-1} B_t(q) \right).
\end{split}
\]
\end{proof}


\section{Open problems}

\begin{question}
We know definition of Rota--Baxter operator on Lie algebra. Is it possible to define Rota--Baxter operators on a  Lie super-algebra?
\end{question}

\subsection{Other operators}
On arbitrary   algebra $A$ it is possible defined the following operators:
\begin{itemize}
\item[(i)] Reynolds operator: $R(x) R(y)  = R(x R(y) + R(x)y - R(x)R(y))$;

\item[(ii)] Nijenhuis operator: $R(x) R(y)  = R(x R(y) + R(x) y - R(x y))$;

\item[(iii-a)] Right averaging operator: $R(x) R(y)  = R(x R(y))$;

\item[(iii-b)] Left averaging operator: $R(x) R(y)  = R(R(x) y)$;

\item[(vi)] Differential operator of weight $\lambda$:  $d(x  y) = d(x) y + x d(y) + \lambda d(x)d(y)$.
\end{itemize}

\begin{question}
Is it possible to define these operators on groups (quandles, racks)?

\end{question}

\begin{question}
Let $(G, B)$ be a Rota--Baxter group. Under which conditions  $B$ defines a $RB$-operator on the conjugacy  quandle $\Conj(G)$, on the core quandle $\Core(G)$,  on the generalized Alexander quandle $Alex(G, \varphi)$, respectively?
\end{question}

\subsection{From Lie racks to Leibniz algebra}

 In \cite{GLS} it has been proven that if $(G, B)$ is a Rota--Baxter Lie group, then the tangent map $B$ at identity is a Rota--Baxter operator of weight $1$ on the Lie algebra of the Lie group $G$.

A {\it pointed rack}  $(X, *, 1)$ is a rack $(X,*)$, with a distinguished element $1 \in X$ such that
\[
x * 1 = x, \quad\text{and}\quad  1 * x = 1 \qquad (x \in X).
\]
 Imitating the notion of a Lie group, the smooth version of a pointed rack is called {\it Lie rack}.
Let $(X, *, 1)$ be a smooth pointed rack and  let $T_1X$ be the  tangent space of it in~$1$.
In \cite{Kin} was proved that there exists
a bilinear mapping $[\cdot , \cdot ] :  T_1X \times T_1X \to T_1X$ such that
 $(T_1X, [\cdot , \cdot ])$ is a Leibniz algebra.

\begin{question}
Let $(X, *, 1)$ be a  smooth pointed rack   with a  relative Rota--Baxter or averaging operator $B$, and  $(T_1X, [\cdot , \cdot ])$ be the corresponding  Leibniz algebra. What operator induces $B$ on this algebra?
\end{question}


\end{document}